\documentclass[12pt,reqno]{amsart}

\usepackage[utf8]{inputenc}
\usepackage[T1]{fontenc}

\usepackage[usenames,dvipsnames]{color}

\usepackage{amsfonts, amsmath, amssymb, amscd, latexsym, amsthm}

\usepackage[english]{babel}

\usepackage{enumerate}
\usepackage{longtable}
\usepackage{multirow,longtable,array}
\usepackage{geometry}
\usepackage{float}
\usepackage{adjustbox}
\usepackage{rotating}
\usepackage{booktabs}
\usepackage{array}

\usepackage{tikz}
\usetikzlibrary{shapes,arrows}
\usepackage{xypic}
\usepackage{pifont}

\usepackage[pagebackref]{hyperref}
\renewcommand*{\backref}[1]{}
\renewcommand*{\backrefalt}[4]{%
	\ifcase #1
	(\tt not cited)%
	\or
	(\tt cited on page~#2)%
	\else
	(\tt cited on pages~#2)%
	\fi}
\usepackage{hyperref}
\hypersetup{
	colorlinks = true,
	urlcolor   = blue,
	citecolor  = blue
}

\newtheorem{theorem}{Theorem}[section]
\newtheorem{lemma}[theorem]{Lemma}

\theoremstyle{remark}

\def\QQ{\mathbb{Q}}

\def\ZZ{\mathbb{Z}}

\usepackage{dsfont, amsfonts, amsmath, amssymb,amscd, stmaryrd, latexsym, amsthm, dsfont}
\usepackage{enumerate}
\usepackage{longtable}
\usepackage{geometry}
\usepackage{float}
\usepackage{tikz}
\usetikzlibrary{shapes,arrows}

\usepackage{pifont}
\usepackage{float}
\usepackage{tikz}
\usepackage{xypic}
\usetikzlibrary{shapes,arrows}
\usepackage{multirow}
\usepackage{multicol}
\begin{document}
	
	\def\2r{\mathrm{rank}}
	\def\rank{\mathrm{rank}}
	\def\rg{\mathrm{rank}}
	\def\C{\mathrm{C}}
	\def\Y{\mathbf{Y}}
	\def\V{\mathbf{V}}
	\def\vep{\varepsilon}
	
	\selectlanguage{english}
	
	\title[On real biquadratic number fields for which the  $2$-Iwasawa module  has rank $3$]{On real biquadratic number fields for which the  $2$-Iwasawa module  has rank $3$}

	\author[M. M. Chems-Eddin]{Mohamed Mahmoud Chems-Eddin}
	\address{Mohamed Mahmoud CHEMS-EDDIN: Department of Mathematics, Faculty of Sciences Dhar El Mahraz, Sidi Mohamed Ben Abdellah University, Fez,  Morocco}
	\email{2m.chemseddin@gmail.com}
	
	\author[I. Daqaq]{Ikrame Daqaq}
	\address{Ikrame DAQAQ: Departement of Mathematics, Faculty of Sciences Dhar El Mahraz,  Sidi Mohamed Ben Abdellah University, Fez, Morocco}
	\email{ikramedq@gmail.com}
	
	\subjclass[2010]{11R27, 11R11, 11R20, 11R04, 11R29.}
	\keywords{Unit group, multiquadratic number fields, unit index, Iwasawa theory}
	
	\begin{abstract} 
		We investigate  the stability of the $2$-rank in the cyclotomic $\mathbb{Z}_2$-extension of real biquadratic number fields. We determine several families of real biquadratic fields $F$ for which the ranks of the $2$-class group $A(F)$ and the associated $2$-Iwasawa module $A_\infty(F)$   are both equal to $3$. More precisely, we provide a complete classification of all fields $F=\mathbb{Q}(\sqrt{q},\sqrt{d}),$ where $q\equiv 3\pmod{4}$ is a prime number and $d$ is an odd positive square-free integer, and all fields $F=\mathbb{Q}(\sqrt{2q},\sqrt{d}),$ 
	where $q\equiv 3\pmod{4}$ is a prime number and $d\equiv 1\pmod{4}$ is a positive square-free integer, for which
	$	\operatorname{rank}(A(F))=\operatorname{rank}(A_\infty(F))=3.$
	 	\end{abstract}
	
	\maketitle
	\section{\bf Introduction}
	
Let $\ell$ denote   a prime number. Let $\ZZ_\ell$  be the additive group of $\ell$-adic integers,  $\Lambda=\mathbb Z_\ell \llbracket  T \rrbracket$, the formal series ring, and 	$M$  a finitely generated   $\Lambda$-module.   It is known, by the structure theorem,  that  there exist  $r$, $s$,  $t$, $n_i$, $m_i  \in\mathbb Z$,  and  $f_i$   distinguished and irreducible polynomials  of  $\mathbb Z_\ell[T]$ such that 
\begin{eqnarray*}
	M\sim \Lambda^r \oplus \left( \bigoplus_{i=1}^{s} \Lambda/ (\ell^{n_i})\right)\oplus \left(  \bigoplus_{j=1}^{t}\Lambda/(f_j(T)^{m_j}) \right).
\end{eqnarray*}

Let  $k$ be a   number field and $\ell$ a prime number. Let $A_\ell(k)$  or simply $A(k)$ when $\ell=2$  (resp.   $E_k$) denote the $\ell$-class group (resp.   the unit group) of $k$.
A   $\ZZ_\ell$-extension of $k$ is an infinite extension of $k$ denoted by  $k_\infty$ such that $\mathrm{Gal}(k_\infty/k)\simeq \ZZ_\ell$. For each $n\geq 1$, the extension $k_\infty/k$ contains a unique field denoted by $k_n$ and called the $n$-th layer, which has degree $\ell^n$ over $k$, of the cyclotomic $\ZZ_\ell$-extension of $k$. Furthermore, we have:
$$k=k_0\subset k_1 \subset k_2 \subset\cdots \subset k_n \subset \cdots  \subset k_\infty=\bigcup_{n\geq 0} k_n.$$
In particular, for an odd prime number $\ell$, let $ \QQ_{\ell, n}$   be the unique real subfield  of the cyclotomic field  $\QQ(\zeta_{\ell^{n+1}}) $ of degree $\ell^n$ over $\QQ$,  
and for $\ell=2$,   let $  \QQ_{2, n}$ be the field $\QQ(2\cos( {2\pi}/{2^{n+2}}))$, for all $n\geq 1$.
Then $k_\infty= \bigcup_{n\geq 0} k_n$, where $k_n= k\QQ_{\ell, n}$,  is called the   cyclotomic $\ZZ_\ell$-extension of $k$.  
The inverse limit $A(k_\infty):=\varprojlim A_\ell(k_n)$
with respect to the norm maps is called the Iwasawa module for $k_\infty/k$; and it is   a finitely generated $\Lambda$-module (cf. \cite{iwasawa59} and \cite{washington1997introduction} for more details).


	For $\ell=2$, several authors have recently investigated the $2$-rank of $A_\infty(k)$ when $k$ is a quadratic or biquadratic number field (see, e.g., \cite{chemskatharina,Elmahi1,Elmahi2,Elmahi3,8,Laxmi,mizusawa5,MR2024}). It is worth noting that all the results in these papers concerning the stability of the $2$-rank of the class group along the cyclotomic $\mathbb{Z}_2$-extension of the given number field are restricted to the case where this $2$-rank is at most $2$; that is,
	\[
	\operatorname{rank}(A(k))=\operatorname{rank}(A_\infty(k))\leq 2
	\]
	for all $n\geq 1$. An exception is provided by Mizusawa in \cite{mizusawa5}, who constructed infinite families of real quadratic number fields $k$ satisfying
	\[
	\operatorname{rank}(A(k_1))=\operatorname{rank}(A_\infty(k))=3.
	\]

Motivated by these results on real quadratic number fields, we construct several families of	real biquadratic number fields $F$
such that 
\begin{eqnarray}
	 \operatorname{rank}(A(F))=\operatorname{rank}(A_\infty(F))=3.
\end{eqnarray}
  	More precisely, letting $F= \mathbb{Q}(\sqrt{q},\sqrt{d})$, with $q\equiv3\pmod 4$ a prime number and $d$ an odd positive square-free integer, or 
  	 $F= \mathbb{Q}(\sqrt{2q},\sqrt{d})$, with  $q\equiv 3 \pmod{4}$ a prime number  and $d\equiv 1 \pmod{4}$   a positive square-free integer, we give the complete classification of all these fields   such that $\operatorname{rank}(A(F))=\operatorname{rank}(A_\infty(F))=3$.
		
Letting $L:=\mathbb{Q}(\sqrt{q},\sqrt{r}),$
	where $r\equiv 1 \pmod{8}\quad\text{ and }
	\left(\frac{q}{r}\right)=1$, we state the main theorem of this paper as follows:
\begin{theorem}[\bf The Main Theorem] 
 Let $F$ be a real biquadratic number field such that  $F \neq L$ and $F= \mathbb{Q}(\sqrt{q},\sqrt{d})$, with $q\equiv3\pmod 4$ a prime number and $d$ an odd positive square-free integer, or 
 	$F= \mathbb{Q}(\sqrt{2q},\sqrt{d})$, with  $q\equiv 3 \pmod{4}$ a prime number and $d\equiv 1 \pmod{4}$ a positive square-free integer. Then $\operatorname{rank}(A(F_{\infty}))=3$ and $\operatorname{rank}(A(F_{\infty}))=\operatorname{rank}(A(F))$ if and only if $F$ takes one of the following forms:
	\begin{enumerate}[$1)$]

		\item 	 $F=\QQ(\sqrt{q},\sqrt{d})$, 
		where $d>2$ is  relatively prime to $q$, and
		$d=rst$, for prime numbers $r$, $s$ and $t$  such that  $ \left( \frac{q}{s}\right)=\left( \frac{q}{r}\right)=-\left( \frac{q}{t}\right)=1 $ and one of the following conditions holds:
		\begin{enumerate}[$a)$]
			\item  	$r\equiv s \equiv 3 \pmod 8$, and $[t \equiv 5 \pmod 8 \text{ or } (t \equiv 3 \pmod 8 \text{ and }q \equiv 7 \pmod 8) \text{ or } (t \equiv 7 \pmod 8 \text{ and }q \equiv 3 \pmod 8)]$,
			\item $r\equiv s \equiv 5 \pmod 8$, and $ [t \equiv 3 \pmod 8 \text{ or } t \equiv 5 \pmod 8 \text{ or }  (t \equiv 7 \pmod 8 \text{ and } q \equiv 3 \pmod 8 )]$.

		\end{enumerate}
		
		\item  $F=\QQ(\sqrt{q},\sqrt{d})$, where
		  $d>2$ is relatively prime to $q$, and
		$d=rst$, for prime numbers $r$, $s$ and $t$   such that $ \left( \frac{q}{s}\right)=\left( \frac{q}{r}\right)=\left( \frac{q}{t}\right)=1 $ and one of the following conditions holds:
	 	\begin{enumerate}[$a)$]
			\item $q \equiv 3 \pmod 8$,  $r\equiv t\equiv 3 \pmod 8$ and $s \equiv 5 \pmod 8$,
			\item  $q \equiv 3 \pmod 8$, $r \equiv 3\pmod 8$ and $s \equiv t \equiv 5 \pmod 8$,

			\item  $q \equiv 3 \pmod 8$, $r \equiv 3\pmod 8$, $s \equiv 5 \pmod 8$, $ t \equiv 7 \pmod 8$ and $\overline{\sqrt{\varepsilon_{2q}}} \notin \langle \overline{-1}, \overline{\varepsilon_2}, \overline{\sqrt{\varepsilon_{q}}} \rangle$, 
			
			\item  $q \equiv 3 \pmod 8$, $r \equiv t \equiv 5\pmod 8$, $s \equiv  7 \pmod 8$ and  $\overline{\sqrt{\varepsilon_{2q}}}   \notin \langle \overline{-1}, \overline{\varepsilon_2}, \overline{\sqrt{\varepsilon_{q}}} \rangle$,
			
			\item $q \equiv 7 \pmod 8$, $r \equiv t \equiv 3 \pmod 8$  and $s  \equiv 5 \pmod 8$,
			
			\item $q \equiv 7 \pmod 8$, $r  \equiv 3 \pmod 8$  and $s \equiv t  \equiv 5 \pmod 8$,
			
			\item  $q \equiv 3 \pmod 8$, $r\equiv 3 \pmod 8$, $s\equiv 7 \pmod 8$, $t \equiv 3 \pmod 8$ and such that $\overline{\sqrt{\varepsilon_{2q}}},\overline{\sqrt{\varepsilon_{q}}}  \notin \langle \overline{-1}, \overline{\varepsilon_2} \rangle$.
		\end{enumerate}
		
		\item
	
		$F=\QQ(\sqrt{2q},\sqrt{d})$,   where   $d=\delta rst$,  for prime numbers     $r$, $s$, $t$    and an integer  $\delta \in \{ 1,q \} $, such that one of the following conditions holds: 
		\begin{enumerate}[$a)$]
		 	\item $q \equiv 3 \pmod 8$, $r \equiv s \equiv  3 \pmod 8$, $\left( \frac{q}{r}\right)=\left( \frac{q}{s}\right)=-1$, $t \equiv 7 \pmod 8$ and $\left( \frac{q}{t} \right)=-1$,
			\item $q \equiv 3 \pmod 8$, $r \equiv s \equiv  5 \pmod 8$, $\left( \frac{q}{r}\right)=\left( \frac{q}{s}\right)=-1$, $t \equiv 7 \pmod 8$ and  $\left( \frac{q}{t} \right)=-1$,
			\item \ $q \equiv 3 \pmod 8$, $r \equiv s \equiv  3 \pmod 8$, $\left( \frac{q}{r}\right)=\left( \frac{q}{s}\right)=-1$, $t \equiv 5 \pmod 8$ and  $ \left( \frac{q}{t} \right)=1$,
			\item \ $q \equiv 3 \pmod 8$, $r \equiv s \equiv  5 \pmod 8$, $\left( \frac{q}{r}\right)=\left( \frac{q}{s}\right)=-1$, $t \equiv 3$, $5 \pmod 8$ and  $ \left( \frac{q}{t} \right)=1$,
			\item \ $q \equiv 7 \pmod 8$, $r \equiv s \equiv  3 \pmod 8$, $\left( \frac{q}{r}\right)=\left( \frac{q}{s}\right)=-1$, $t \equiv 3$, $5 \pmod 8$ and  $ \left( \frac{q}{t} \right)=1$,
			\item \ $q \equiv 7 \pmod 8$, $r \equiv s \equiv  5 \pmod 8$, $\left( \frac{q}{r}\right)=\left( \frac{q}{s}\right)=-1$, $t \equiv 3$, $5 \pmod 8$ and  $ \left( \frac{q}{t} \right)=1$,
			\item $q \equiv 3 \pmod 8$, $r \equiv 3 \pmod 8$, $s \equiv 3 \pmod 8$, $\left( \frac{q}{r}\right)=\left( \frac{q}{s}\right)=-1$, $t \equiv 5 \pmod 8$ and $\left( \frac{q}{t}\right)=-1$,
			\item $q \equiv 3 \pmod 8$, $r \equiv 3 \pmod 8$, $s \equiv 3 \pmod 8$, $\left( \frac{q}{r}\right)=\left( \frac{q}{s}\right)=-1$, $t \equiv 7 \pmod 8$,  $\left( \frac{q}{t}\right)=1$ and such that $\overline{\sqrt{\varepsilon_{2q}}},\overline{\sqrt{\varepsilon_{q}}}  \notin \langle \overline{-1}, \overline{\varepsilon_2} \rangle$,
			\item $q \equiv 3 \pmod 8$, $r \equiv 3 \pmod 8$, $s \equiv 5 \pmod 8$, $\left( \frac{q}{r}\right)=\left( \frac{q}{s}\right) =-1$, $t \equiv 5 \pmod 8$ and $\left( \frac{q}{t}\right)=-1$,
			\item $q \equiv 3 \pmod 8$, $r \equiv 3 \pmod 8$, $s \equiv 5 \pmod 8$,   $\left( \frac{q}{r}\right)=\left( \frac{q}{s}\right)=-1$, $t \equiv 7 \pmod 8$, $\left( \frac{q}{t}\right)=1$ and such that $\overline{\sqrt{\varepsilon_{q}}}  \notin \langle \overline{\sqrt{\varepsilon_{2q}}},\overline{-1}, \overline{\varepsilon_2} \rangle$,
			\item $q \equiv 3 \pmod 8$, $r \equiv 7 \pmod 8$, $s \equiv 3 \pmod 8$, $t \equiv 5 \pmod 8$, $\left( \frac{q}{r}\right)=1$, $\left( \frac{q}{s}\right)=\left( \frac{q}{t}\right)=-1$ and such that $\overline{\sqrt{\varepsilon_{q}}}  \notin \langle \overline{\sqrt{\varepsilon_{2q}}},\overline{-1}, \overline{\varepsilon_2} \rangle$,
			\item $q \equiv 3 \pmod 8$, $r \equiv 7 \pmod 8$, $s \equiv 5 \pmod 8$, $t \equiv 5 \pmod 8$, $\left( \frac{q}{r}\right)=1$, $\left( \frac{q}{s}\right)=\left( \frac{q}{t}\right)=-1$ and such that $\overline{\sqrt{\varepsilon_{q}}}  \notin \langle \overline{\sqrt{\varepsilon_{2q}}},\overline{-1}, \overline{\varepsilon_2}\rangle$,
			\item $q \equiv 7 \pmod 8$, $r \equiv 3 \pmod 8$, $s \equiv 3 \pmod 8$, $t \equiv 5 \pmod 8$, $\left( \frac{q}{r}\right)=-1$ and $\left( \frac{q}{s}\right)=\left( \frac{q}{t}\right)=-1$,
			\item $q \equiv 7 \pmod 8$, $r \equiv 3 \pmod 8$, $s \equiv 5 \pmod 8$, $t \equiv 5 \pmod 8$, $\left( \frac{q}{r}\right)=-1$ and $\left( \frac{q}{s}\right)=\left( \frac{q}{t}\right)=-1$.
		\end{enumerate}
		\end{enumerate}
Here, for a unit $u\in F'_{1}:=\QQ(\sqrt{q},\sqrt{2})$,  we let $\overline{u}$ denote the class of $u$ in the quotient	  $E_{F'_{1}} / E_{F'_{1}} \cap N_{F_1/F'_{1}}(k^\times)$,  where $F_{1}:=\QQ(\sqrt{q},\sqrt{d},\sqrt{2} )$.
\end{theorem}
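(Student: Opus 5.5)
The key observation is that for the cyclotomic $\mathbb{Z}_2$-extension, stability of the $2$-rank from $F$ onward is equivalent to stability at the first step. Since $F_\infty/F$ is totally ramified at the primes above $2$ from $F_1$ on (in our situations), a Fukuda-type argument gives the following. If $\operatorname{rank}(A(F_1))=\operatorname{rank}(A(F))$, then $\operatorname{rank}(A(F_n))=\operatorname{rank}(A(F))$ for all $n$, and hence $\operatorname{rank}(A_\infty(F))=\operatorname{rank}(A(F))$. Conversely, $\operatorname{rank}(A(F))\le\operatorname{rank}(A(F_n))\le \operatorname{rank}(A_\infty(F))$ by surjectivity of the norm maps in a totally ramified tower. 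So the statement reduces to classifying the $F$ of the given shapes with
\[
\operatorname{rank}(A(F))=\operatorname{rank}(A(F_1))=3 .
\]
Here $F_1=F(\sqrt2)=\QQ(\sqrt q,\sqrt d,\sqrt2)$ in both families, because $\QQ(\sqrt{2q},\sqrt d,\sqrt 2)=\QQ(\sqrt q,\sqrt d,\sqrt2)$. The plan therefore has two parts:
\begin{enumerate}[(i)]
\item compute $\operatorname{rank}(A(F))$ via genus theory for biquadratic fields;
\item compute $\operatorname{rank}(A(F_1))$ via the ambiguous class number formula for the quadratic extension $F_1/F'_1$, where $F'_1=\QQ(\sqrt q,\sqrt2)$.
\end{enumerate}

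\textbf{Step (i).} For $\operatorname{rank}(A(F))$ I would use Kuroda's class number formula together with the rank formula for the $2$-class group of a real biquadratic field. Equivalently, I would apply the ambiguous class formula to $F/\QQ(\sqrt q)$ (respectively $F/\QQ(\sqrt{2q})$):
\[
\operatorname{rank} A(F)=t_F-1-e_F ,
\]
where $t_F$ is the number of ramified primes of $\QQ(\sqrt q)$ in $F$ (including infinite ones, which do not occur here since $F$ is real). The term $e_F$ is the $2$-rank of $E/(E\cap N)$, which is computed from the norm residue symbols of $-1$ and $\varepsilon_q$. The class number of $\QQ(\sqrt q)$ is odd, so there is no extra contribution from the base.

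In this setting $t_F$ is governed by the splitting of the primes dividing $d$ in $\QQ(\sqrt q)$, that is, by the values $\left(\frac{q}{p}\right)$. This is why $d$ turns out to have exactly three prime factors, split or inert according to the listed Legendre symbols. Requiring $\operatorname{rank}=3$ forces $d=rst$ (times $\delta$ in the second family) with the prescribed splitting pattern. It also bounds the number of prime divisors of $d$ from both sides, using $\operatorname{rank}(A(F))\ge$ (number of ramified primes) $-\,3$.

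\textbf{Step (ii).} For $\operatorname{rank}(A(F_1))$ I would work with $F_1/F'_1$. The field $F'_1$ has odd class number (Kuroda plus the fact that $h(2),h(q),h(2q)$ are odd). Its unit group is known explicitly: it is generated by $-1,\varepsilon_2,\sqrt{\varepsilon_q},\sqrt{\varepsilon_{2q}}$, or by a subgroup of index $2$ thereof, according to $q\bmod 8$. Then
\[
\operatorname{rank}(A(F_1))=t_{F_1}-1-\operatorname{rank}\bigl(E_{F'_1}/E_{F'_1}\cap N_{F_1/F'_1}(F_1^\times)\bigr).
\]
The number $t_{F_1}$ of primes of $F'_1$ ramified in $F_1$ is counted by the decomposition of $r,s,t$ in $\QQ(\sqrt2,\sqrt q)$. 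This depends on their residues mod $8$ and on $\left(\frac{q}{\cdot}\right)$, and it explains the mod $8$ case splitting. Each unit $u\in\{-1,\varepsilon_2,\sqrt{\varepsilon_q},\sqrt{\varepsilon_{2q}}\}$ is then tested for being a local norm at every ramified prime, using Hilbert symbols $\left(\frac{u,d}{\mathfrak p}\right)$. For $-1$ and $\varepsilon_2$ these reduce to Legendre symbols of $2$ and $-1$ modulo the primes. For $\sqrt{\varepsilon_q}$ and $\sqrt{\varepsilon_{2q}}$ one uses the explicit expressions $\sqrt{2\varepsilon_q}$ and $\sqrt{\varepsilon_{2q}}$ in terms of $\sqrt{q}$, $\sqrt{2}$ and small norms, together with quartic-type residue conditions. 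Where these cannot be decided in general, the condition is simply recorded in the statement, which is the origin of the hypotheses of the form $\overline{\sqrt{\varepsilon_{2q}}}\notin\langle\dots\rangle$.

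Imposing $\operatorname{rank}(A(F_1))=3$ together with the constraints from Step (i) then gives the explicit list. The excluded field $L$ is the case where the count degenerates, because $r\equiv1\pmod 8$ splits completely in $F'_1$.

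\textbf{Main obstacle.} I expect the hardest part to be the unit norm index computation in Step (ii): determining, case by case, the rank of $E_{F'_1}/(E_{F'_1}\cap N)$ when the primes $r,s,t$ split in $F'_1$ into several primes. In that situation the symbols of $\sqrt{\varepsilon_q}$ and $\sqrt{\varepsilon_{2q}}$ at the different conjugate primes are not determined by congruences alone. My first approach is to use the product formula and Galois conjugation to relate the symbols at conjugate primes, thereby isolating exactly the undetermined residual conditions that appear in the statement. The remaining case bookkeeping (which congruence classes give $t_{F_1}-1-e=3$) is tedious but routine.
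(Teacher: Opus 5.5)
Your proposal is essentially the paper's own argument. Fukuda's lemma with $n_0=0$ (the primes above $2$ already ramify in $F_1/F$), together with surjectivity of the norm maps, reduces the statement to $\operatorname{rank}(A(F))=\operatorname{rank}(A(F_1))=3$. The paper then settles this exactly as you plan: it applies Yue's formula to $F/\QQ(\sqrt q)$ (resp.\ $F/\QQ(\sqrt{2q})$) with the Azizi--Mouhib values of $e_{F/K}$ (Lemmas \ref{lemma1} and \ref{lemma3}). It then applies the same formula to $F_1/F'_1$, using the Hilbert symbols of $-1,\varepsilon_2,\sqrt{\varepsilon_q},\sqrt{\varepsilon_{2q}}$ from Lemma \ref{chemseddin}, and records the undecided symbols as the conditions $\overline{u}\notin\langle\cdots\rangle$.

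Two things need fixing.

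First, Step (i) does not force $d=rst$. The forms $d=rs$ and $d=rstk$ (times $\delta$) also give $\operatorname{rank}(A(F))=3$, and they have to be discarded at $F_1$. For $d=rstk$ this is Lemma \ref{lemma41} and the last lemma of Section \ref{sec3}. There $t_{F_1/F'_1}\ge 8$, with equality only when no prime divisor of $d$ splits completely in $F'_1$; in that case $-1$ is a local norm everywhere, so in all cases $\operatorname{rank}(A(F_1))\ge 4$.

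Second, $E_{F'_1}$ is exactly $\langle -1,\varepsilon_2,\sqrt{\varepsilon_q},\sqrt{\varepsilon_{2q}}\rangle$, not an index-$2$ subgroup. Kuroda's formula alone does not give the oddness of $h(F'_1)$; that comes from genus theory over $\QQ(\sqrt2)$. Once $h(F'_1)$ is known to be odd, Kuroda's formula forces the unit index to be $4$.
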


\section{\bf Preliminary results}\label{sec2}

We begin by recalling some preliminary results that will be needed throughout the paper.

\begin{lemma}[\cite{kuruda}]
	Let $k$ be a multiquadratic number field of degree $2^n$, with $n\geq 2$ an integer, and $k_i$, $i=1,\ldots,2^n-1$, the  quadratic subfields of $k$. Then
	$$h(k)=\frac{1}{2^v}q(k)\prod_{i=1}^{s}h(k{_i}),$$
	with $q(k)=[E_{k}: \prod_{i=1}^{s}E_{k_{i}}]$ and $$v=\left\{ \begin{array}{cl}
		n(2^{n-1}-1); &\text{ if } k \text{ is real, }\\
		(n-1)(2^{n-2}-1)+2^{n-1}-1 & \text{ if } k \text{ is imaginary.}
	\end{array}\right.$$
\end{lemma}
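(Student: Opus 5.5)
The statement just given is Kuroda's class number formula for multiquadratic fields, quoted from \cite{kuruda}; my plan is to derive it from the Brauer–Kuroda relation between zeta functions. Let $G=\mathrm{Gal}(k/\mathbb{Q})\simeq(\mathbb{Z}/2)^n$. The first step is the character decomposition
\[
\zeta_k(s)=\zeta_{\mathbb{Q}}(s)\prod_{\chi\neq 1}L(s,\chi).
\]
Each nontrivial $\chi$ cuts out exactly one quadratic subfield $k_i$, with $\zeta_{k_i}=\zeta_{\mathbb{Q}}L(s,\chi)$. Hence
\[
\zeta_k(s)\,\zeta_{\mathbb{Q}}(s)^{2^n-2}=\prod_{i=1}^{2^n-1}\zeta_{k_i}(s),
\]
where $s=2^n-1$ in the statement.

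The second step is to compare the leading terms of both sides at $s=0$. By the analytic class number formula,
\[
\zeta_F(s)\sim -\frac{h_F R_F}{w_F}\, s^{r_1+r_2-1}.
\]
The orders of vanishing agree automatically. Comparing constants gives
\[
\frac{h_kR_k}{w_k}=\pm\, 2^{\,2-2^n}\prod_i\frac{h_iR_i}{w_i},
\]
with the power of $2$ coming from $\zeta_{\mathbb{Q}}(0)=-1/2$ and the identity $w_{\mathbb{Q}}=2$.

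The third step is to convert the regulator ratio into the unit index $q(k)=[E_k:\prod_i E_{k_i}]$. In the real case every $k_i$ is real with fundamental unit $\varepsilon_i$. The subgroup $\langle -1,\varepsilon_1,\dots,\varepsilon_{2^n-1}\rangle$ has regulator equal to $|\det(\log|\sigma(\varepsilon_i)|)|$, taken over a suitable $2^n-1$ embeddings. Using the Galois action, $\sigma(\varepsilon_i)=\varepsilon_i^{\pm1}$ up to sign, this matrix is $2^{-?}\prod\log\varepsilon_i$ times a $\pm1$ character matrix. Its determinant is computed via orthogonality of characters of $(\mathbb{Z}/2)^n$, and has absolute value $2^{n(2^{n-1})-?}$ up to bookkeeping. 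Thus
\[
R(\text{subgroup})=q(k)R_k=2^{a}\prod_i R_i .
\]
Combining this with step two and $w=2$ everywhere yields $v=n(2^{n-1}-1)$.

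The imaginary case is handled the same way. Half of the $k_i$ are imaginary and have $R_i=1$, and the torsion terms $w$ must be tracked. There is also the standard care about whether roots of unity of $k$ lie in the product of the $E_{k_i}$; this is what makes $q(k)$ the right index. The exponent $(n-1)(2^{n-2}-1)+2^{n-1}-1$ then comes out of the analogous determinant over the real subfield.

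I expect the main obstacle to be the exact evaluation of the power of $2$ in the determinant of the character-sign matrix. This is a Hadamard-type matrix of order $2^n$ with one row and one column removed. My first approach would be to write it as a tensor product of $2\times2$ blocks $\begin{pmatrix}1&1\\1&-1\end{pmatrix}$, note that the determinant of the full matrix is $\pm 2^{n2^{n-1}}$, and then account for deleting the trivial character via the cofactor, which divides by $2^n$. Getting the normalization of the $\log|\cdot|$ factor for complex places right in the imaginary case is the other place where errors are likely.
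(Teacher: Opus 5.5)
Your strategy is the classical analytic proof of this formula: Brauer's relation $\zeta_k\,\zeta_{\mathbb{Q}}^{2^n-2}=\prod_i\zeta_{k_i}$, comparison of leading terms at $s=0$, and a regulator computation through the character table of $(\mathbb{Z}/2)^n$. The paper gives no argument of its own and only cites Kuroda.

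Your Hadamard cofactor computation is correct. There is in fact no ``$2^{-?}$'' factor: $\log|\sigma(\varepsilon_i)|=\chi_i(\sigma)\log\varepsilon_i$ exactly. So the regulator of $\langle -1,\varepsilon_1,\dots,\varepsilon_{2^n-1}\rangle$ is $2^{n2^{n-1}}\cdot 2^{-n}\prod_i\log\varepsilon_i$.

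\textbf{Step 2 is wrong as written.} The factor $\zeta_{\mathbb{Q}}(0)^{2^n-2}=2^{2-2^n}$ multiplies $\zeta_k$, so dividing it out gives
\[
\frac{h_kR_k}{w_k}=2^{\,2^n-2}\prod_{i=1}^{2^n-1}\frac{h_iR_i}{w_i},
\]
with no sign ambiguity. With your exponent $2^{2-2^n}$, the real case gives $v=n(2^{n-1}-1)+2^{n+1}-4$, for instance $v=6$ instead of $2$ for real biquadratic fields. So your claim that combining Steps 2 and 3 yields $v=n(2^{n-1}-1)$ is false as stated. With the corrected exponent and $w=2$ throughout, you get $h_kR_k=\prod_ih_iR_i$. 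Then $q(k)R_k=2^{n(2^{n-1}-1)}\prod_iR_i$ gives exactly $v=n(2^{n-1}-1)$.

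\textbf{The imaginary case is incomplete.} ``The analogous determinant over the real subfield'' only accounts for the part $(n-1)(2^{n-2}-1)$ of the exponent. Write $E''=\prod_iE_{k_i}=W''\langle\varepsilon_j\rangle$. Here $W''$ is generated by the roots of unity of the $2^{n-1}$ imaginary quadratic subfields, and the $\varepsilon_j$ are the fundamental units of the $2^{n-1}-1$ real ones, all lying in $k^+$.

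Since $E''_{\mathrm{tors}}=W''$, the correct comparison is $R_k(E'')=q(k)\,|W''|\,R_k/w_k$; this is the precise content of your remark about roots of unity. Normalize the regulator by $2\log|\cdot|$ at complex places, as the formula $\zeta_F^*(0)=-h_FR_F/w_F$ requires. Then
\[
R_k(E'')=2^{2^{n-1}-1}\cdot 2^{(n-1)(2^{n-2}-1)}\prod_j\log\varepsilon_j .
\]
Substitute this into the corrected Step 2. The factor $2^{2^{n-1}-1}$ from the complex places and the factor $2^{2^{n-1}-1}$ from $w_j=2$ for the real $k_j$ together cancel $2^{2^n-2}$. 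This leaves
\[
h_k=q(k)\prod_i h_i\cdot\frac{|W''|}{\prod_{k_i\ \mathrm{imaginary}} w_i}\cdot 2^{-(n-1)(2^{n-2}-1)} .
\]
The missing $2^{n-1}-1$ comes from this torsion ratio. Only $\mathbb{Q}(i)$ and $\mathbb{Q}(\sqrt{-3})$ have $w_i>2$, and $\mu_4\mu_6=\mu_{12}$. Hence $|W''|=2\prod_{\mathrm{imag}}(w_i/2)$, and $|W''|/\prod_{\mathrm{imag}}w_i=2^{1-2^{n-1}}$ in every case. This uniformity is why the index $q(k)$ must be taken against the full product $\prod_iE_{k_i}$, roots of unity included. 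You need to state it explicitly for the imaginary case to be complete.
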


\begin{lemma}[{\cite[Lemma 2.4]{yue}}]
	Let $k/k'$ be a  quadratic extension of number fields such that $h_2(k')$ is odd. Then the rank of the $2$-class group of $k$ is given by
	\[
	\operatorname{rank}(A(k))
	=
	t_{k/k'}-1-e_{k/k'},
	\]
	where $t_{k/k'}$ is the number of ramified primes (finite or infinite) in the extension $k/k'$ and $e_{k/k'}$ is defined by
	\[
	2^{e_{k/k'}}
	=
	\left[E_{k'} : E_{k'} \cap N_{k/k'}(k^\times)\right].
	\]
\end{lemma}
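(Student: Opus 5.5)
The statement to be proved is the last one displayed, namely the Lemma quoting \cite[Lemma 2.4]{yue}: the rank formula $\operatorname{rank}(A(k))=t_{k/k'}-1-e_{k/k'}$ for a quadratic extension $k/k'$ with $h_2(k')$ odd. I would prove it by the classical genus theory argument (Chevalley's ambiguous class number formula), as follows.

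Let $G=\mathrm{Gal}(k/k')=\langle\sigma\rangle$. The first step is to show that the $2$-rank of $A(k)$ equals the $2$-rank of the group of ambiguous classes $A(k)^G$. Since $h_2(k')$ is odd, the norm map $N=1+\sigma$ sends $A(k)$ into the image of $A(k')$, which is trivial. Hence $\sigma$ acts as $-1$ on $A(k)$. Consequently $A(k)^G=A(k)[2]$, and
$$\operatorname{rank}(A(k))=\dim_{\mathbb{F}_2}A(k)[2]=\log_2|A(k)^G|.$$

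The second step applies Chevalley's formula to the $2$-part:
$$|A(k)^G|=\frac{h_2(k')\,2^{t-1}}{[E_{k'}:E_{k'}\cap N_{k/k'}(k^\times)]}.$$
Here $t=t_{k/k'}$ counts all ramified places, including the infinite ones (real places of $k'$ that become complex in $k$). The full Chevalley formula is $|Cl(k)^G|=h(k')\,2^{t-1}/[E_{k'}:E_{k'}\cap N k^\times]$. Its $2$-part equals $|A(k)^G|$, because $(Cl(k)^G)_2=A(k)^G$ and, since $[E_{k'}:E_{k'}\cap Nk^\times]$ is a power of $2$ (as $E_{k'}^2\subset N k^\times$), the only odd contribution comes from $h(k')$. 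Since $h_2(k')=1$, this gives $|A(k)^G|=2^{t-1-e}$.

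Combining the two steps yields $\operatorname{rank}(A(k))=t_{k/k'}-1-e_{k/k'}$.

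The only delicate point is the first step: one has to check that the norm map on ideal classes, composed with extension back to $k$, really equals $1+\sigma$, and that its image is the trivial $2$-group. Both follow from $j\circ N_{k/k'}=1+\sigma$ on $Cl(k)$, where $j$ is the extension-of-ideals map, together with the fact that $A(k')=1$.
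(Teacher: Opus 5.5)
Your proof is correct. The paper gives no argument of its own and simply cites Yue's Lemma 2.4, and your route is the standard proof of that result: $j\circ N_{k/k'}=1+\sigma$ and $A(k')=1$ force $\sigma$ to act as $-1$ on $A(k)$, so $A(k)^G=A(k)[2]$, and then you take the $2$-part of Chevalley's ambiguous class number formula (with infinite ramified places counted and $E_{k'}^2\subset N_{k/k'}(k^\times)$).
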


	The following lemma is a particular case of Fukuda's Theorem  \cite{fukudaa}.

\begin{lemma}[{\cite{fukudaa}}]
	Let $k_{\infty}/k$ be a $\mathbb{Z}_2$-extension and $n_0$ an integer such that any prime of $k_{\infty}$ which is ramified in $k_{\infty}/k$ is totally ramified in $k_{\infty}/k_{n_0}$.
 	\begin{enumerate}[$1)$]
		\item If there exists an integer $n \geq n_0$ such that $h_2(k_n)=h_2(k_{n+1}),$ then $h_2(k_n)=h_2(k_m)$ 
		for all $m \geq n$.
		
		\item If there exists an integer $n \geq n_0$ such that
		$\operatorname{rank}(A(k_n))
		=\operatorname{rank}(A(k_{n+1})),$
		then $\operatorname{rank}(A(k_m))=\operatorname{rank}(A(k_n))$
		for all $m \geq n$.
	\end{enumerate}
\end{lemma}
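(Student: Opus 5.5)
The statement just before this point is Fukuda's lemma, which the paper takes from \cite{fukudaa}; here I sketch how the argument goes, and then how the lemma will be used for the Main Theorem.

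\emph{Setting up the Galois-theoretic description.} Let $L_n$ be the maximal unramified abelian $2$-extension of $k_n$, so $A(k_n)\simeq \mathrm{Gal}(L_n/k_n)$, and put $G=\mathrm{Gal}(k_\infty/k_{n_0})\simeq\ZZ_2$ with topological generator $\gamma$. Let $L$ be the maximal unramified abelian pro-$2$ extension of $k_\infty$ and $X=\mathrm{Gal}(L/k_\infty)$. Every ramified prime of $k_\infty/k$ is totally ramified in $k_\infty/k_{n_0}$. Label these primes $\mathfrak p_1,\dots,\mathfrak p_s$ and fix inertia groups $I_i\subset \mathrm{Gal}(L/k_{n_0})$, each isomorphic to $G$ and generated by some $\sigma_i$ lifting $\gamma$. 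Write $\sigma_i=a_i\sigma_1$ with $a_i\in X$. Iwasawa's standard argument then gives, for every $m\ge n_0$,
$$A(k_m)\simeq X/Y_m,\qquad Y_m=\nu_{m}\,Y_{n_0},$$
where $Y_{n_0}$ is generated by $(\gamma-1)X$ and the $a_i$, and $\nu_m=1+\gamma_{n_0}+\dots+\gamma_{n_0}^{2^{m-n_0}-1}$ is taken relative to $\gamma_{n_0}=\gamma$. More intrinsically, write $\omega_m=\gamma^{2^{m-n_0}}-1$. Then $\nu_{m+1,m}=\omega_{m+1}/\omega_m$ and $Y_{m+1}=\nu_{m+1,m}Y_m$.

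\emph{Part 1.} Suppose $h_2(k_n)=h_2(k_{n+1})$. Then $X/Y_n$ and $X/Y_{n+1}$ have the same order, and since $Y_{n+1}\subseteq Y_n$ this forces $Y_{n+1}=Y_n$, that is, $\nu_{n+1,n}Y_n=Y_n$. The element $\nu_{n+1,n}$ lies in the maximal ideal $(2,T)$ of $\Lambda$, because $\nu_{n+1,n}\equiv 2\bmod T$ after writing $\gamma^{2^{n-n_0}}=1+T'$. Since $Y_n$ is a finitely generated compact $\Lambda$-module, Nakayama's lemma gives $Y_n=0$. Hence $Y_m=0$ for all $m\ge n$, so $A(k_m)\simeq X$ for all such $m$ and the orders are all equal to $h_2(k_n)$.

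\emph{Part 2.} The rank statement works the same way after reducing mod $2$ and Frattini subgroups. We have $A(k_m)/2A(k_m)\simeq X/(2X+Y_m)$. Equality of $2$-ranks at $n$ and $n+1$ gives $2X+Y_{n+1}=2X+Y_n$, because the inclusion is automatic and the quotients have the same order. So $Y_n\subseteq 2X+\nu_{n+1,n}Y_n$. Now apply $\nu_{n+2,n+1}$: this yields $Y_{n+1}\subseteq 2X+\nu_{n+2,n+1}Y_{n+1}$, using $\nu_{n+2,n+1}(2X)\subseteq 2X$. Therefore $2X+Y_{n+1}=2X+Y_{n+2}$, and induction gives constant rank for all $m\ge n$. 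The only delicate point is this last step, where I must check that $\nu$ commutes with the multiplications involved; it does, because $\Lambda$ is commutative.

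\emph{Application to the Main Theorem.} For our fields $F$, the only prime above $2$ is the ramified one, and $n_0\le 1$ once we pass to $F_1=F(\sqrt2)$. The plan is as follows.
\begin{enumerate}[(i)]
\item Compute $\rank A(F)$ via Lemma 2.2 applied to $F/\QQ(\sqrt q)$ or $F/\QQ(\sqrt{2q})$, whose base fields have odd class number. This requires counting the ramified primes and computing the norm index $e$ from Legendre symbols.
\item Compute $\rank A(F_1)$ via Lemma 2.2 applied to $F_1/F_1'$ with $F_1'=\QQ(\sqrt q,\sqrt2)$. Here the unit group of $F_1'$ is generated by $-1$, $\varepsilon_2$, $\sqrt{\varepsilon_q}$ and $\sqrt{\varepsilon_{2q}}$ (up to Kuroda's index), and the norm residue symbols of these units decide $e$; this is where the conditions involving $\overline{\sqrt{\varepsilon_{2q}}}$ and $\overline{\sqrt{\varepsilon_q}}$ enter.
\item Compute $\rank A(F_2)$ similarly, or use a genus argument over $\QQ(\sqrt q,\sqrt{2+\sqrt2})$.
\item Impose $\rank A(F)=\rank A(F_1)=\rank A(F_2)=3$ and invoke Fukuda's lemma.
\end{enumerate}
The main obstacle is step (ii), specifically the Hilbert-symbol evaluations of $\sqrt{\varepsilon_q}$ and $\sqrt{\varepsilon_{2q}}$ at the primes dividing $d$. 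The necessity direction, which bounds the number of primes dividing $d$ to exactly three, follows from $t-1-e\ge 3$ together with $e\le 4$.
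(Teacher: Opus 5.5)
The paper gives no proof of this lemma; it only cites Fukuda. So the comparison here is with the standard argument. Your set-up and Part 1 are correct: $Y_{n+1}=\nu_{n+1,n}Y_n=Y_n$ with $\nu_{n+1,n}=2+(\gamma^{2^{n-n_0}}-1)\in(2,T)$ forces $Y_n=0$ by Nakayama.

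Part 2, however, has a genuine gap in the inductive step. Applying $\nu_{n+2,n+1}$ to $Y_n\subseteq 2X+\nu_{n+1,n}Y_n$ gives
\[
\nu_{n+2,n+1}Y_n\subseteq 2X+\nu_{n+2,n+1}\nu_{n+1,n}Y_n=2X+Y_{n+2}.
\]
Its left-hand side is $\nu_{n+2,n+1}Y_n$, not $Y_{n+1}=\nu_{n+1,n}Y_n$. Applying $\nu_{n+1,n}$ instead puts $Y_{n+1}$ on the left, but then the right-hand side is only $2X+\nu_{n+1,n}Y_{n+1}$, which is not $2X+Y_{n+2}$. Commutativity of $\Lambda$, which you flag as the delicate point, is not the issue. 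With $g=\gamma^{2^{n-n_0}}$ the two operators are $\nu_{n+1,n}=1+g$ and $\nu_{n+2,n+1}=1+g^2$, and they are different. Your induction can be rescued only with an extra input. For example, the congruence $1+g^2\equiv(1+g)^2 \pmod 2$ gives $2X+Y_{n+2}=2X+\nu_{n+1,n}^3Y_n$. Combine this with the iterated hypothesis $2X+Y_n=2X+\nu_{n+1,n}^kY_n$ for all $k$.

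The cleaner fix is to repeat your Part 1 argument modulo $2X$. Put $M=(Y_n+2X)/2X$, a finitely generated $\Lambda$-module. Since $\nu_{n+1,n}(2X)\subseteq 2X$, the hypothesis $2X+Y_n=2X+Y_{n+1}$ says exactly $\nu_{n+1,n}M=M$. Nakayama then gives $M=0$, i.e.\ $Y_n\subseteq 2X$. Hence $Y_m=\nu_{m,n}Y_n\subseteq 2X$ for every $m\ge n$, so $A(k_m)/2A(k_m)\simeq X/2X$ for all $m\ge n$. This also shows $\rank A(k_n)=\rank X$, which is the form in which the paper uses the lemma.

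Your application plan goes beyond the statement, but two remarks apply. First, for these $F$ every prime above $2$ has ramification index $2$ in $F$ but $4$ in $F_1\supset\QQ(\sqrt2,\sqrt q)$. Thus $n_0=0$, and comparing $F$ with $F_1$ suffices, as the paper does; your step (iii) is unnecessary. Second, $F$ may have two primes above $2$ (e.g.\ $F=\QQ(\sqrt q,\sqrt d)$ with $qd\equiv1\pmod 8$). What you actually need is that all ramified primes lie above $2$, not that there is a single prime above $2$.
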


\begin{lemma}[\cite{2MCD2026}, Proof of Lemma 3.5 and Remark 3.6]\label{chemseddin} 
 	Let $F'_{1}:=\QQ(\sqrt{q},\sqrt{2})$,  $F_{1}:=\QQ(\sqrt{q},\sqrt{d},\sqrt{2} )$  and  let $r$ be an odd prime number and $\mathcal R$ be a prime ideal of
	$F_1'$ lying above $r$. Let $\gamma=0$ or $1$ according to whether  $q \equiv 7 \pmod 8$ or  $q \equiv 3 \pmod 8$.
 	If $r$ is not totally decomposed in $F_1'$, then $\left(\frac{-1,r}{\mathcal R}\right)=1,$ else we have $\left(\frac{-1,r}{\mathcal R}\right)
	=
	\left(\frac{-1}{r}\right).$
 	Moreover, we have:
	\begin{enumerate}[$1)$]
		\item  	Assume that $r\equiv 3$ or $5$ $ \pmod 8$. Then 
		 \begin{equation*}
			\left(\frac{\varepsilon_2,r}{\mathcal R}\right)
			=
			\left(\frac{-1}{r}\right)
			=
			(-1)^{\frac{r-1}{2}}.
		\end{equation*}
	 	\begin{enumerate}[$\bullet$]
			
			\item If $\left(\frac qr\right)=-1$, 
			then 
			
			\begin{align*}
				\left(\frac{\sqrt{\varepsilon_q},r}{\mathcal R}\right)
				&=
				\left(\frac{(-1)^{\gamma+1}}{r}\right), \\[2mm]
				\left(\frac{\sqrt{\varepsilon_{2q}},r}{\mathcal R}\right)
				&=
				\left(\frac{-\varepsilon_{2q},r}
				{\mathcal R_{\mathbb Q(\sqrt{2q})}}\right)
				=
				\left(\frac{-1}{r}\right)
				\left(\frac{2}{r}\right).
			\end{align*}
			
			\item  If $\left(\frac qr\right)=1$, then 
			\begin{align*}
				\left(\frac{\sqrt{\varepsilon_q},r}{\mathcal{R}}\right)
				&=
				\left(\frac{-\varepsilon_q,r}{\mathcal{R}_{\mathbb Q(\sqrt 2q)}}\right)
				=
				\left(\frac{-1}{r}\right)
				\left(\frac{2}{r}\right),\\[2mm]
				\left(\frac{\sqrt{\varepsilon_{2q}},r}{\mathcal{R}}\right)
				&=
				\left(\frac{(-1)^{\gamma+1}}{r}\right).
			\end{align*}
			
		\end{enumerate}

		\item	Assume that $r\equiv 7$ or $1$ $ \pmod 8$ with $\left(\frac{q}{r}\right)=-1.$ Then 
		
		\begin{align*}
			\left(\frac{\varepsilon_2,r}{\mathcal{R}}\right)
			&=1,\\[2mm]
			\left(\frac{\sqrt{\varepsilon_q},r}{\mathcal{R}}\right)
			&=\left(\frac{(-1)^{\gamma}}{r}\right),\\[2mm]
			\left(\frac{\sqrt{\varepsilon_{2q}},r}{\mathcal{R}}\right)
			&=\left(\frac{(-1)^{\gamma}}{r}\right).
		\end{align*}

		\item	Assume that $r\equiv 7$ $ \pmod 8$ with $\left(\frac{q}{r}\right)=1.$ Then
		\begin{equation*}
			\begin{aligned}
				\left(\frac{-1,r}{\mathcal R}\right)
				&=
				\left(\frac{-1,r}{\mathcal R'}\right)
				=
				-1,\\[2mm]
				\left(\frac{\varepsilon_2,r}{\mathcal R}\right)
				&=
				1,\\[2mm]
				\left(\frac{\varepsilon_2,r}{\mathcal R'}\right)
				&=
				-1.
			\end{aligned}
		\end{equation*}
		
	Moreover, letting  $u\in \{\sqrt{\varepsilon_q},\sqrt{\varepsilon_{2q}}\}$, we get:
	 	\begin{enumerate}[$a)$]
			\item
			If $q\equiv 3 \pmod 8$, then there exist at least
			$u\in \{\sqrt{\varepsilon_q},\sqrt{\varepsilon_{2q}}\}$
			such that $\left(\frac{u,r}{\mathcal R}\right)
			=-1 \text{ or }
			\left(\frac{u,r}{\mathcal R'}\right)
			=
			-1,$ and $\overline{u}\notin \{\overline{1},\overline{\varepsilon_2}\}$ in the quotient $E_{F_1'}/(E_{F_1'}\cap N_{F_1/F_1'}(F_1)).$

			\item 
			If $q\equiv 7 \pmod 8$, then for all	$u\in \{\sqrt{\varepsilon_q},\sqrt{\varepsilon_{2q}}\},$ we have $\overline{u}\in \{\overline{1},-\overline{1},\overline{\varepsilon_2}\}$ in the quotient $ E_{F_1'}/(E_{F_1'}\cap N_{F_1/F_1'}(F_1)).$
		 	\end{enumerate}
	\end{enumerate}
 
\end{lemma}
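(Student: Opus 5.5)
The plan is to compute Hilbert symbols of the generators of $E_{F_1'}$ at the primes of $F_1'$ above $r$, using the explicit unit group of $F_1'$ and quadratic reciprocity in intermediate quadratic fields; the statement is the one labelled \ref{chemseddin}.

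\textbf{Unit group.} Since $q\equiv 3\pmod 4$, $\varepsilon_q$ and $\varepsilon_{2q}$ have norm $1$, and $2\varepsilon_q$, $2\varepsilon_{2q}$ (or $\varepsilon_q$, $\varepsilon_{2q}$ up to the factor $2$) are squares in $\QQ(\sqrt 2)$. By Kuroda's class number formula and the known computation of $E_{F_1'}$, a fundamental system of units of $F_1'=\QQ(\sqrt2,\sqrt q)$ is $\{\varepsilon_2,\sqrt{\varepsilon_q},\sqrt{\varepsilon_{2q}}\}$, possibly with $\sqrt{\varepsilon_q\varepsilon_{2q}}$ in place of one of them depending on $q \bmod 8$. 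This explains why only $-1,\varepsilon_2,\sqrt{\varepsilon_q},\sqrt{\varepsilon_{2q}}$ need to be treated.

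\textbf{Reduction.} For a unit $u$, $\left(\frac{u,r}{\mathcal R}\right)$ depends only on $u$ modulo $\mathcal R$. Since $\mathcal R$ is unramified, it equals the quadratic residue symbol of $u$ in the residue field $\mathbb F_{N\mathcal R}$, raised to the power $v_{\mathcal R}(r)=1$. If $r$ is not totally split, the residue field has even degree over $\mathbb F_r$. Then every element of $\mathbb F_r^\times$, and $-1$ in particular, is a square, which gives the first assertion. If $r$ splits completely, the residue field is $\mathbb F_r$ and the symbol is $\left(\frac{-1}{r}\right)$. For $\varepsilon_2$ I use the norm formula: if $\mathcal R$ lies over a prime $\mathfrak r$ of $\QQ(\sqrt2)$ with local degree $f$, then $\left(\frac{\varepsilon_2,r}{\mathcal R}\right)=\left(\frac{N(\varepsilon_2),r}{\mathfrak r}\right)$ when $f=2$, and it is computed directly when $f=1$. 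When $r\equiv 3,5\pmod 8$, $r$ is inert in $\QQ(\sqrt2)$ and $N\varepsilon_2=-1$, giving $\left(\frac{-1}{r}\right)$. When $r\equiv\pm1\pmod 8$ and $\left(\frac qr\right)=-1$, $\mathfrak r$ is inert in $F_1'$, so the symbol equals $\left(\frac{-1,r}{\mathfrak r}\right)=1$. The same device handles $\sqrt{\varepsilon_q}$ and $\sqrt{\varepsilon_{2q}}$: I choose the quadratic subfield $K\in\{\QQ(\sqrt q),\QQ(\sqrt{2q})\}$ in which $r$ is inert, so that $F_1'/K$ is split at $r$ precisely in the subcases listed. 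I then write $\varepsilon_q=\tfrac{1}{2}\alpha^2$ or $\varepsilon_q=\alpha^2/2$ type identities (e.g.\ $\sqrt{2\varepsilon_q}\in\QQ(\sqrt q)$, so $\sqrt{\varepsilon_q}=\beta/\sqrt2$). This reduces the symbol to symbols of $\sqrt2$, $\pm1$ and elements of $K$, for which $N_{K/\QQ}$ converts everything to Legendre symbols of $2$ and $-1$ mod $r$. The sign ambiguities in $N(\beta)=\pm 2$ are governed by $q\bmod 8$, and this is the source of $\gamma$. The resulting identities are $\left(\frac{\sqrt{\varepsilon_{2q}},r}{\mathcal R}\right)=\left(\frac{-\varepsilon_{2q},r}{\mathcal R_{\QQ(\sqrt{2q})}}\right)=\left(\frac{-1}{r}\right)\left(\frac2r\right)$, and so on.

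\textbf{Case $r\equiv 7\pmod8$, $\left(\frac qr\right)=1$.} Here $r$ splits completely into four primes, including $\mathcal R$ and its conjugate $\mathcal R'$ under $\sqrt2\mapsto-\sqrt2$. Since $r\equiv 3\pmod 4$, the symbol of $-1$ is $-1$ at both. Because $\varepsilon_2\cdot\varepsilon_2'=-1$, the values at $\mathcal R$ and $\mathcal R'$ are opposite, and after relabelling this gives $1$ and $-1$. For $u=\sqrt{\varepsilon_q}$ or $\sqrt{\varepsilon_{2q}}$, consider the product over the four primes. Product formula arguments, together with the relation $\sqrt{\varepsilon_q}\sqrt{\varepsilon_{2q}}\sim\sqrt{\varepsilon_2}\cdot(\dots)$ when $q\equiv 3\pmod 8$, show that not both $u$ can be trivial at every prime. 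This yields a), while for $q\equiv7\pmod8$ the square-root unit relations force each $\overline u$ into $\langle\overline{-1},\overline{\varepsilon_2}\rangle$, which is b).

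\textbf{Main obstacle.} The hardest step is this last case. The symbols there are not determined by congruences alone, so the argument has to be existential: show that the local symbol vectors of $\sqrt{\varepsilon_q}$ and $\sqrt{\varepsilon_{2q}}$ cannot both lie in the span of those of $-1$ and $\varepsilon_2$. I would try to derive a contradiction from $N_{F_1'/\QQ(\sqrt2)}\sqrt{\varepsilon_q}=\pm1$-type identities combined with the product formula.
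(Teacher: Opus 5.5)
The paper gives no proof of this lemma; it is quoted from \cite{2MCD2026}. Your proposal therefore has to stand on its own.

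\textbf{What works.} Your method for the first assertion and for parts $1)$, $2)$ is sound. You use residue degrees and write $\sqrt{\varepsilon_q}=\beta/\sqrt2$ with $\beta=\sqrt{2\varepsilon_q}\in\QQ(\sqrt q)$, and $\sqrt{\varepsilon_{2q}}=\beta'/\sqrt2$ with $\beta'=\sqrt{2\varepsilon_{2q}}\in\QQ(\sqrt{2q})$. You also need $N(\beta)=N(\beta')=-2$ or $2$ according as $q\equiv 3$ or $7\pmod 8$; justify this by noting that $\pm2$ must be a square mod $q$. Two slips need fixing:
\begin{itemize}
\item These squares live in $\QQ(\sqrt q)$ and $\QQ(\sqrt{2q})$, not in $\QQ(\sqrt 2)$.
\item In case $2)$ the relevant norm is $N_{F_1'/\QQ(\sqrt2)}(\varepsilon_2)=\varepsilon_2^2$, not $-1$. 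At the degree-one prime $\mathfrak r$ one has $\left(\frac{-1}{\mathfrak r}\right)=\left(\frac{-1}{r}\right)=-1$ when $r\equiv 7\pmod 8$.
\end{itemize}

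\textbf{The gap.} Part $3)$\,a), b) is not proved, and the tools you name cannot prove it.
\begin{itemize}
\item There is no relation $\sqrt{\varepsilon_q}\sqrt{\varepsilon_{2q}}\sim\sqrt{\varepsilon_2}\cdot(\dots)$ in $F_1'$. Indeed $\varepsilon_q\varepsilon_{2q}$ is totally positive and $\varepsilon_2$ is not, so $\varepsilon_2\varepsilon_q\varepsilon_{2q}$ is not a square in the totally real field $F_1'$.
\item The product formula only says that $(u,r)=-1$ at an even number of the four primes above $r$. The place above $2$ contributes nothing, since $F_1'(\sqrt r)=F_1'(\sqrt{qr})$ with $qr\equiv 1\pmod 4$. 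The even sign vectors form a group of order $8$ containing the order-$4$ group spanned by the vectors of $-1$ and $\varepsilon_2$. So parity cannot decide membership in that span.
\end{itemize}
Nor is an existential argument needed. The individual signs depend on $r$, but the relations between them are forced.

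\textbf{The missing idea: two relative norms.} Label the primes above $r$ by the images $(\pm a,\pm b)$ of $(\sqrt2,\sqrt q)$ modulo $r$. For two primes exchanged by $\mathrm{Gal}(F_1'/K)$, the product of the symbols of $x$ equals the symbol of $N_{F_1'/K}(x)$ at the degree-one prime of $K$ below. For $\varepsilon_2,\sqrt{\varepsilon_q},\sqrt{\varepsilon_{2q}}$:
\begin{itemize}
\item the norms to $\QQ(\sqrt2)$ are $\varepsilon_2^2$, $N(\beta)/2$, $N(\beta')/2$;
\item the norms to $\QQ(\sqrt q)$ are $-1$, $-\beta^2/2$, $-N(\beta')/2$.
\end{itemize}
Now use $\left(\frac{-1}{r}\right)=-1$ and $\left(\frac{2}{r}\right)=1$, and order the primes as $(a,b),(a,-b),(-a,b),(-a,-b)$. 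You get $-1\mapsto(-1,-1,-1,-1)$ and $\varepsilon_2\mapsto(e,e,-e,-e)$. For $q\equiv 3\pmod 8$ you get $\sqrt{\varepsilon_q}\mapsto(x,-x,-x,x)$ and $\sqrt{\varepsilon_{2q}}\mapsto(w,-w,w,-w)$. For $q\equiv 7\pmod 8$ both have the form $(x,x,-x,-x)$.

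\textbf{What this gives.} It proves a), and in fact both $u$ lie outside $\langle\overline{-1},\overline{\varepsilon_2}\rangle$. It proves b) only as a statement about the symbols above $r$, since the quotient also depends on the other primes dividing $d$. Moreover b) holds with the group $\langle\overline{-1},\overline{\varepsilon_2}\rangle$ that you wrote, not with the set $\{\overline{1},\overline{-1},\overline{\varepsilon_2}\}$. The pattern of $-\varepsilon_2$ does occur: for $q=7$, $r=31$, the unit $\sqrt{\varepsilon_7}=(3+\sqrt7)/\sqrt2$ has signs $(-1,-1,1,1)$, while $\varepsilon_2$ has $(1,1,-1,-1)$.
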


\section{\bf The Proof of the  Main Theorem} \label{sec3}
 The proof of the main theorem is based on the following intermediate results.

\begin{lemma}\label{lemma1}
	Let $F:=\QQ(\sqrt{q},\sqrt{d})$, where $q\equiv 3 \pmod 4$ is a prime number and $d > 1$ is an odd square-free integer that is not divisible by $q$. Then $\rank(A(F)) = 3$ if and only if $F$ takes one of the following forms:
	\begin{enumerate}[\rm $1)$]
		\item  $F:=\QQ(\sqrt{q},\sqrt{d}),$ where $d=rs$ with $r$ and $s$ are two prime numbers such that 
		$\left( \frac{q}{r}\right) = \left( \frac{q}{s}\right) =1$ and $r\equiv s \equiv 1 \pmod 8$.

		\item $F:=\QQ(\sqrt{q},\sqrt{d}),$ where $d=rst$ with $r$, $s$ and $t$ are prime numbers satisfying $\left( \frac{q}{r}\right)=1$ and one of the following conditions:
		\begin{enumerate}[$a)$]

			\item $\left( \frac{q}{s}\right) =\left( \frac{q}{t}\right)=-1 $ and $r \equiv 1 \pmod 8$,
			
			\item $ \left( \frac{q}{s}\right) =-\left( \frac{q}{t}\right)=1 $ and $[(r\equiv s \equiv 3 \pmod 8)$, $(r\equiv s \equiv 5 \pmod 8)$, $(r\equiv s \equiv 7 \pmod 8)$ or $(r \equiv 1 \pmod 8 \text{ and } s \equiv 3 , 5 \text{ or } 7 \pmod 8 )]$,
			
			\item $ \left( \frac{q}{s}\right) =\left( \frac{q}{t}\right)=1 $ and $\exists\, q_1, q_2,q_3 \in \{ r, s, t \} $ such that $q_1 \equiv 3 \pmod 4, q_2 \equiv 3 $, $5  \pmod 8 $ and $\left( \frac{-1}{t}\right) \neq \left( \frac{2}{t}\right)$.
		\end{enumerate}
		\item  $F:=\QQ(\sqrt{q},\sqrt{d}),$ where $d=rstk$ with $r$, $s$, $t$ and $k$ are primes numbers satisfying  $\left( \frac{q}{k}\right) = \left( \frac{q}{t}\right) =-1 $  and one of the following conditions:
		\begin{enumerate}[$a)$]
			\item  $\left( \frac{q}{r}\right) = \left( \frac{q}{s}\right) =1 $ and $[(r \equiv 5 \pmod 8 \text{ and } s \equiv 3  \pmod 4 )$ or $(r \equiv 3\pmod 8 \text{ and } s \equiv  7  \pmod 8 )]$,

			\item $\left( \frac{q}{r}\right) =- \left( \frac{q}{s}\right) =1 $ and $r \not\equiv 1 \pmod 8$,
			
			\item $\left( \frac{q}{r}\right) = \left( \frac{q}{s}\right) =-1 $. 
			
		\end{enumerate}

	\end{enumerate}
\end{lemma}
 \begin{proof}
	Assume that  $F:=\QQ(\sqrt{q},\sqrt{d})$ where  $d > 1$ is an odd positive square-free integer that is not divisible by $q$. Let $t$ be the number of prime ideals of $K=\QQ(\sqrt{q})$ that are ramified in $F$.
Thus, $\rank(A(F)) = t_{F/K}-1-e_{F/K}$. As $e_{F/K} \in \{0,1,2 \} $, then the equality $\rank(A(F)) =3$ implies that $t_{F/K}=4, 5 \text{ or } 6$. The last equalities are equivalent to the fact that $d$ takes one of the following four forms:
$$rs \text{ with } \left( \frac{q}{r}\right) = \left( \frac{q}{s}\right) =1,\ rst \text{ with } \left( \frac{q}{r}\right) =1, \ rstk \text{ with } \left( \frac{q}{k}\right) = \left( \frac{q}{t}\right) =-1,$$

$$ rstke \text{ with } \left( \frac{q}{s}\right) = \left( \frac{q}{t}\right) =\left( \frac{q}{k}\right) = \left( \frac{q}{e}\right) =-1, $$
where $r, s, t, k$ and $e$ are prime numbers all different from q. In the following, we make use of \cite[Theorem 3.3]{Azmouh2-rank} in which the authors computed the index $e_{F/K}$.
\begin{enumerate}[\rm $1)$]
	
	\item Let $d=rs$, then
	\begin{enumerate}[\rm $\bullet$]
		
		\item  If $\left( \frac{q}{r}\right) = \left( \frac{q}{s}\right) =1$, then $t_{F/K}=4$ and so we have $\rank(A(F))= 3$ if and only if $ \left( \frac{-1}{r}\right) = \left( \frac{2}{r}\right)=\left( \frac{-1}{s}\right)=\left( \frac{2}{s}\right)=1$, which is equivalent to $r\equiv 1 \pmod 8$ and $s\equiv 1 \pmod 8$,
	\end{enumerate}
	
	\item Let $d=rst$ such that $\left( \frac{q}{r}\right) =1$, then we distinguish the following conditions:
	
	\begin{enumerate}[\rm $\bullet$]
		\item If $\left( \frac{q}{s}\right) = \left( \frac{q}{t}\right) =-1$, then $t_{F/K}=4$ and so  $\rank(A(F))= 3$ if and only if $r\equiv 1 \pmod 8$,
		
		\item  If $\left( \frac{q}{s}\right) = -\left( \frac{q}{t}\right) =1$, then $t_{F/K}=5$ and so $\rank(A(F))= 3$ if and only if $e_{F/K}=1$, which is  equivalent to $[r\equiv s \equiv 3 \pmod 8]$, $[r\equiv s \equiv 5 \pmod 8]$, $[r\equiv s \equiv 7 \pmod 8]$ or $[r \equiv 1 \pmod 8 \text{ and } s \equiv 3 , 5 \text{ or } 7 \pmod 8 ]$,
		
		\item  If $\left( \frac{q}{s}\right) = \left( \frac{q}{t}\right) =1$, then $t_{F/K}=6$ and so $\rank(A(F))= 3$ if and only if $\exists\, q_1, q_2,q_3 \in \{ r, s, t \} $ such that $q_1 \equiv 3 \pmod 4, q_2 \equiv 3 , 5  \pmod 8 $ and $\left( \frac{-1}{q_3}\right) \neq \left( \frac{2}{q_3}\right)$,
		
	\end{enumerate}
	
	\item  Let $d=rstk$ such that $\left( \frac{q}{k}\right) =-1$ and $\left( \frac{q}{t}\right) =-1$, we discuss the following conditions:
	\begin{enumerate}[\rm $\bullet$]
		
		\item If $\left( \frac{q}{r}\right) = \left( \frac{q}{s}\right) =1$, then $t_{F/K}=6$, hence $\rank(A(F))= 3$ if and only if $e_{F/K}=2$, which is equivalent to $[r \equiv 5 \pmod 8 \text{ and } s \equiv 3  \pmod 4 ]$ or $[r \equiv 3\pmod 8 \text{ and } s \equiv  7  \pmod 8 ]$,
		
		\item If $\left( \frac{q}{r}\right) = -\left( \frac{q}{s}\right) =1$, then $t_{F/K}=5$, hence $\rank(A(F))= 3$ if and only if  $e_{F/K}=1$, which is equivalent to $r \not\equiv 1 \pmod 8$,
		
		\item If $\left( \frac{q}{r}\right) = \left( \frac{q}{s}\right) =-1$, then $t_{F/K}=   4$, hence $\rank(A(F))= 3$ ,
	\end{enumerate}
	\item Let $d=rstke$ where $\left( \frac{q}{s}\right) = \left( \frac{q}{t}\right) =\left( \frac{q}{k}\right) = \left( \frac{q}{e}\right) =-1$, in this case $\rank(A(F))> 3$.
	\end{enumerate}	Combining all of the above, we obtain the desired result.
\end{proof}

\begin{lemma}\label{lemma3}
	
	Let $F:=\QQ(\sqrt{2q},\sqrt{d})$, where $q\equiv 3 \pmod 4$ is a prime number and $d \equiv 1 \pmod 4$ is a positive square-free integer.
	Let $\delta \in \{ 1,q \} $, then $\rank(A(F)) = 3$ if and only if $F$ takes one of the following forms:
	
	\begin{enumerate}[$1)$]
		\item 
		
		$d=\delta rs$, $\left( \frac{2q}{r} \right)= \left( \frac{2q}{s} \right) = 1$ and $[r\equiv s \equiv 1 \pmod 8]$,
		\item 
		$d=\delta rst$, $\left( \frac{2q}{r} \right)= 1$ and such that one of the following conditions is satisfied:
		
		\begin{enumerate}[$a)$]
			\item $\left( \frac{2q}{s} \right)=\left( \frac{2q}{t} \right)= -1$ and $[r\equiv 1 \pmod 8]$,
			
			\item $\left( \frac{2q}{s} \right)=-\left( \frac{2q}{t} \right)= 1$ and $[(r\equiv s \equiv 3 \pmod 8)$ or $(r\equiv s \equiv 5 \pmod 8)$ or $(r\equiv s \equiv 7 \pmod 8)$ or $(r \equiv 1 \pmod 8 \text{ and } s \equiv 3 , 5 \text{ or } 7 \pmod 8 )]$,
			
			\item $\left( \frac{2q}{s} \right)=\left( \frac{2q}{t} \right)= 1$ and $\exists\, q_1, q_2,q_3 \in \{ r, s, t \} $ such that  $\left( \frac{-1}{q_1} \right)=\left( \frac{2}{q_2} \right)=-1 \text{ and } \left( \frac{-1}{q_3} \right) \neq \left( \frac{2}{q_3} \right)$,
		\end{enumerate}
		
		\item $d=\delta rstk$, $\left( \frac{2q}{k} \right)= \left( \frac{2q}{t} \right)=-1$,  and such that one of the following conditions is satisfied:
		
		\begin{enumerate}[$a)$]
			\item $\left( \frac{2q}{r} \right)=\left( \frac{2q}{s} \right)= -1$,
			\item $\left( \frac{2q}{r} \right)=-\left( \frac{2q}{s} \right)= 1$ and $[r\equiv 3 \pmod 4$ or $r\equiv 5 \pmod 8]$,
			\item  $\left( \frac{2q}{r} \right)=\left( \frac{2q}{s} \right)= 1$ and  $[(r \equiv 3 \pmod 4 \text{ and } s \equiv  5  \pmod 8 )$ or $(r \equiv 7 \pmod 8 \text{ and } s \equiv  3  \pmod 8 )]$.
		\end{enumerate}

	\end{enumerate}

\end{lemma}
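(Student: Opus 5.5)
The plan follows the proof of Lemma \ref{lemma1} closely, using the base field $K=\QQ(\sqrt{2q})$ in place of $\QQ(\sqrt q)$. The class number $h(2q)$ is odd for a prime $q\equiv 3\pmod 4$. Hence the rank formula of \cite[Lemma 2.4]{yue} applies to the quadratic extension $F/K$:
\[
\rank(A(F))=t_{F/K}-1-e_{F/K}.
\]
The fundamental unit $\varepsilon_{2q}$ has norm $1$, and $E_K=\langle -1,\varepsilon_{2q}\rangle$, so $e_{F/K}\in\{0,1,2\}$. Therefore $\rank(A(F))=3$ forces $t_{F/K}\in\{4,5,6\}$.

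First we count the ramified places. Since $d\equiv 1\pmod 4$, the prime $2$ is unramified in $\QQ(\sqrt d)$, and the primes above $2$ in $K$ do not ramify in $F=K(\sqrt d)$. The two infinite places of $K$ do not ramify because $d>0$. The prime $q$ is ramified in $K$, so whether or not $q\mid d$ (this is the role of $\delta\in\{1,q\}$) it contributes no new ramification: writing $d=\delta d'$, we have $F=K(\sqrt{d'})$ or $K(\sqrt{2d'})$ up to multiplication by $2q$, and the ramification count depends only on the primes $p\mid d'$. Each such $p$ contributes $2$ ramified primes if $\left(\frac{2q}{p}\right)=1$ and $1$ if $\left(\frac{2q}{p}\right)=-1$. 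Listing the factorizations of $d'$ that give $t_{F/K}\in\{4,5,6\}$ and $\rank\le 3$ yields exactly the shapes $rs$, $rst$ and $rstk$ of the statement. The case of $5$ inert primes is excluded, as in Lemma \ref{lemma1}, because it forces rank $>3$ (and $t=6$ with $e\le 2$ is handled separately). We also discard the configurations whose sizes are incompatible with the required value of $e$.

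Next, in each case we determine $e_{F/K}$ by evaluating the Hilbert symbols $\left(\frac{-1,d}{\mathfrak p}\right)$ and $\left(\frac{\varepsilon_{2q},d}{\mathfrak p}\right)$ at the ramified primes $\mathfrak p$. By the product formula it suffices to use all ramified primes but one. For a split prime $p$ with primes $\mathfrak p,\mathfrak p'$ above it, the symbol of $-1$ is $\left(\frac{-1}{p}\right)$. The symbol of $\varepsilon_{2q}$ is $\left(\frac{2}{p}\right)\left(\frac{-1}{p}\right)$ up to the conjugate-swap. This uses the standard fact that $\varepsilon_{2q}=2x^2$ or $x^2$ times $\sqrt{2q}$-related squares; more precisely, $2\varepsilon_{2q}$ or $\varepsilon_{2q}$ is a square times a divisor of $2q$, which gives $\left(\frac{\varepsilon_{2q}}{\mathfrak p}\right)=\left(\frac{2}{p}\right)$ for $q\equiv 3 \pmod 8$, with a variant for $q\equiv 7\pmod 8$, and the $-1$ symbol is absorbed because only products over conjugates matter. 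For an inert $p$ both symbols are trivial, since units are norms locally at unramified-in-$K$ inert primes: the residue field has square order, and $-1$ and $\varepsilon_{2q}$, having norm $1$, become squares there.

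Then $e_{F/K}$ is the $\mathbb F_2$-rank of the matrix of these symbols. Running through each case gives the conditions: $e=0$ requires all the listed primes to be $\equiv 1\pmod 8$; $e=1$ requires the rank-$1$ conditions of item $2b$ and item $3b$; and $e=2$ requires the linear independence condition on $(-1,2)$-characters of items $2c$ and $3c$. The main obstacle I expect is precisely the correct local symbol of $\varepsilon_{2q}$ at split primes: it requires the known decomposition $\varepsilon_{2q}=\frac{(x\sqrt{2}+y\sqrt q)^2}{?}$ and a careful check that the answer depends only on $\left(\frac{2}{p}\right),\left(\frac{-1}{p}\right)$. Here I would invoke \cite[Theorem 3.3]{Azmouh2-rank}, as in Lemma \ref{lemma1}, or its analogue for $\QQ(\sqrt{2q})$, instead of recomputing.
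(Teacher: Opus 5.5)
Your plan is the paper's proof. It uses the base field $K=\QQ(\sqrt{2q})$ with odd class number, the formula $\rank(A(F))=t_{F/K}-1-e_{F/K}$ with $e_{F/K}\le 2$, the list of shapes of $d$ with $t_{F/K}\in\{4,5,6\}$, and \cite[Theorem 3.3]{Azmouh2-rank} for $e_{F/K}$; the paper gives no more detail than this.

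Your paragraph on split primes, however, is wrong as written and should not replace the citation. For every prime $q\equiv 3\pmod 4$ one has $2\varepsilon_{2q}\in K^{\times 2}$ (e.g.\ $2\varepsilon_6=(2+\sqrt6)^2$, $2\varepsilon_{14}=(4+\sqrt{14})^2$). Hence $\left(\frac{\varepsilon_{2q},d}{\mathfrak p}\right)=\left(\frac{\varepsilon_{2q},d}{\mathfrak p'}\right)=\left(\frac{2}{p}\right)$ for each split $p\mid d$, with no variant for $q\equiv 7\pmod 8$. Your value $\left(\frac{2}{p}\right)\left(\frac{-1}{p}\right)$ is the symbol of $-\varepsilon_{2q}$.

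The justification ``only products over conjugates matter'' is also false. Those products are identically $1$ because $N(\varepsilon_{2q})=1$, and $e_{F/K}$ is the rank of the individual symbols. Your slip is harmless only because $\langle -1,\varepsilon_{2q}\rangle=\langle -1,-\varepsilon_{2q}\rangle$.

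With the correct values, $e_{F/K}$ is the dimension of the span of the vectors $\bigl(\left(\frac{-1}{p}\right),\left(\frac{2}{p}\right)\bigr)$ over the split primes $p\mid d$. This yields items $1)$--$3)$ at once. It also settles the shapes you left vague: five or six inert primes give $e_{F/K}=0$ and rank $4$ or $5$, and one split prime with four inert primes gives $e_{F/K}\le 1$ and rank at least $4$.
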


\begin{proof}
 	Assume that  $F:=\QQ(\sqrt{2q},\sqrt{d})$, where $q\equiv 3 \pmod 4$ is a prime number and $d \equiv 1 \pmod 4$ is a positive square-free integer. Let $\delta \in \{ 1,q \} $ and $t_{F/K}$ be the number of prime ideals of $K=\QQ(\sqrt{2q})$ that are ramified in $F$.
 	Thus, $\rank(A(F)) = t_{F/K}-1-e_{F/K}$. As $e_{F/K} \in \{0,1,2 \} $, then the equality $\rank(A(F)) =3$ implies that $t_{F/K}=4, 5 \text{ or } 6$. The last equalities are equivalent to the fact that $d$ takes one of the following four forms:
	 	\[
	\quad \delta rs \text{ with }\ \left( \frac{2q}{r} \right)= \left( \frac{2q}{s} \right) = 1 ,\quad \delta rst \ \text{with}\ 
	\left( \frac{2q}{r} \right) = 1,
	\]
	
	\[
	\delta rstk \ \text{with}\ 
	\left( \frac{2q}{k} \right)
	=
	\left( \frac{2q}{t} \right)
	= -1,
	\]
	
	\[
	\delta rstke \ \text{with}\ 
	\left( \frac{2q}{s} \right)
	=
	\left( \frac{2q}{t} \right)
	=
	\left( \frac{2q}{k} \right)
	=
	\left( \frac{2q}{e} \right)
	= -1,
	\]
	
	$
	\text{where } \delta \in \{1, q\} \text{ and } r, s, t, k \text{ and } e 
	\text{ are prime numbers all different from }$ $q$. In the following, we make use of \cite[Theorem 3.3]{Azmouh2-rank} in which the authors computed the index $e_{F/K}$.

		\begin{enumerate}[\rm $1.$]
		
		\item  Let $d=\delta rs$ and $\left( \frac{2q}{r} \right)= \left( \frac{2q}{s} \right) = 1$.
		As	$\rank(A(F)) = t_{F/K}-1-e_{F/K}$, and in this case we have $ t_{F/K}=4$, then $\rank(A(F)) = 3$ if and only if $e_{F/K}=0$, which is equivalent to $\left( \frac{2}{r} \right)= \left( \frac{-1}{r} \right) = \left( \frac{2}{s} \right)= \left( \frac{-1}{s} \right)=1$, which is equivalent to $r\equiv s \equiv 1 \pmod 8$,
		
		\item  Let $d=\delta rst$ and $\left( \frac{2q}{r} \right)= 1$, we discuss the following cases:
		\begin{enumerate}[\rm $\bullet$]
			\item If  $\left( \frac{2q}{s} \right)=\left( \frac{2q}{t} \right)= -1$, then  $\rank(A(F)) =3-e_{F/K}$ and it is equal to $3$ if and only if $e_{F/K}=0$, means $\left( \frac{2}{r} \right)= \left( \frac{-1}{r} \right) = 1$, which is equivalent to $r\equiv 1 \pmod 8$,
			
			\item  If  $\left( \frac{2q}{s} \right)=-\left( \frac{2q}{t} \right)= 1$, then  $\rank(A(F)) =4-e_{F/K}$ and it is equal to $3$ if and only if $e_{F/K}=1$, which is equivalent to $[r\equiv s \equiv 3 \pmod 8$, $r\equiv s \equiv 5 \pmod 8$, $r\equiv s \equiv 7 \pmod 8$ or $(r \equiv 1 \pmod 8 \text{ and } s \equiv 3 , 5 \text{ or } 7 \pmod 8 )]$, 
			
			\item  If  $\left( \frac{2q}{s} \right)=\left( \frac{2q}{t} \right)= 1$, then $\rank(A(F)) =5-e_{F/K}$ and it is equal to $3$ if and only if $e_{F/K}=2$, which is equivalent to the existence of  $ q_1, q_2,q_3 \in \{ r, s, t \} $ such that  $\left( \frac{-1}{q_1} \right)=\left( \frac{2}{q_2} \right)=-1 \text{ and } \left( \frac{-1}{q_3} \right) \neq \left( \frac{2}{q_3} \right)$,
		\end{enumerate}
		
		\item Let $d=\delta rstk$ and $\left( \frac{2q}{k} \right)= \left( \frac{2q}{t} \right)=-1$, we have the following conditions:
		\begin{enumerate}[\rm $\bullet$]
			\item If  $\left( \frac{2q}{r} \right)=\left( \frac{2q}{s} \right)= -1$, then $\rank(A(F)) =3-e_{F/K}$, under these conditions we have $\rank(A(F)) =3$,
			
			\item If $\left( \frac{2q}{r} \right)=-\left( \frac{2q}{s} \right)= 1$, then $\rank(A(F)) =4-e_{F/K}$ and it is equal to $3$ if and only if $e_{F/K}=1$, means $\left( \frac{2}{r} \right)=-1 $ or $ \left( \frac{-1}{r} \right) =-1$,
			
			\item If  $\left( \frac{2q}{r} \right)=\left( \frac{2q}{s} \right)= 1$, then $\rank(A(F)) =5-e_{F/K}$  and it is equal to $3$ if and only if $e_{F/K}=2$, means one of the two following conditions is satisfied: $(r \equiv 3 \pmod 4 \text{ and } s \equiv  5  \pmod 8 )$ or $(r \equiv 7 \pmod 8 \text{ and } s \equiv  3  \pmod 8 )$,
		\end{enumerate}
		
		\item Let $d=\delta rstke$ and $\left( \frac{2q}{s} \right)
		=
		\left( \frac{2q}{t} \right)
		=
		\left( \frac{2q}{k} \right)
		=
		\left( \frac{2q}{e} \right)
		= -1,$
		in this case $\rank(A(F))> 3$.
	\end{enumerate}
 	This completes the proof.
\end{proof}

Let us recall that $L$ is named to be any real biquadratic field of the form
$L:=\mathbb Q(\sqrt{q},\sqrt r),$ where $r\equiv 1 \pmod 8$ and $\left(\frac{q}{r}\right)=1$.
We have:

\begin{lemma}\label{lemma4}
 	Let $F:=\QQ(\sqrt{q},\sqrt{d}) \neq L$ and $F_{1}:=\QQ(\sqrt{q},\sqrt{d},\sqrt{2})$, 
	where $q\equiv 3 \pmod 4$ is a prime number and $d>2$ is a square-free integer relatively prime to $q$.
	Assume that $d=rst$, where $r$, $s$ and $t$ are prime numbers such that $r$ and $s$ are not congruent to $1$ mod $8$, and such that $\rank(A(F))=3 $ $($cf. Lemma \ref{lemma1}-$2)$$)$.
	Then,  $\rank(A(F_{1}))=3 $ if and only if one of the  following  conditions holds:
	\begin{enumerate}[$1)$]
		\item  $ \left( \frac{q}{s}\right)=\left( \frac{q}{r}\right)=-\left( \frac{q}{t}\right)=1 $ and we have one of the following congruence conditions: 
		
		\begin{enumerate}[\rm $a)$]
			\item  	$r\equiv s \equiv 3 \pmod 8$  and $[t \equiv 5 \pmod 8, \ (t \equiv 3 \pmod 8 \text{ and }q \equiv 7 \pmod 8) \text{ or } (t \equiv 7 \pmod 8 \text{ and }q \equiv 3 \pmod 8)]$,
			\item $r\equiv s \equiv 5 \pmod 8$  and $ [t \equiv 3 \pmod 8 \text{ or } t \equiv 5 \pmod 8 \text{ or }  (t \equiv 7 \pmod 8 \text{ and } q \equiv 3 \pmod 8 )]$,
		 \end{enumerate}
		
		\item $ \left( \frac{q}{s}\right)=\left( \frac{q}{r}\right)=\left( \frac{q}{t}\right)=1 $ and such that one of the following conditions is satisfied:
		\begin{enumerate}[\rm $a)$]
			\item $q \equiv 3 \pmod 8$, $r\equiv t\equiv 3 \pmod 8$ and $s \equiv 5 \pmod 8$,
			\item  $q \equiv 3 \pmod 8$, $r \equiv 3\pmod 8$ and $s \equiv t \equiv 5 \pmod 8$,

			\item  $q \equiv 3 \pmod 8$, $r \equiv 3\pmod 8$, $s \equiv 5 \pmod 8$, $ t \equiv 7 \pmod 8$ and $\overline{\sqrt{\varepsilon_{2q}}} \notin \langle \overline{-1}, \overline{\varepsilon_2}, \overline{\sqrt{\varepsilon_{q}}} \rangle$, 
			
			\item  $q \equiv 3 \pmod 8$, $r \equiv t \equiv 5\pmod 8$, $s \equiv  7 \pmod 8$ and  $\overline{\sqrt{\varepsilon_{2q}}},  \notin \langle \overline{-1}, \overline{\varepsilon_2}, \overline{\sqrt{\varepsilon_{q}}} \rangle$,
			
			\item $q \equiv 7 \pmod 8$, $r \equiv t \equiv 3 \pmod 8$  and $s  \equiv 5 \pmod 8$,
			
			\item $q \equiv 7 \pmod 8$, $r  \equiv 3 \pmod 8$  and $s \equiv t  \equiv 5 \pmod 8$,
			
			\item  $q \equiv 3 \pmod 8$, $r\equiv 3 \pmod 8$, $s\equiv 7 \pmod 8$, $t \equiv 3 \pmod 8$ and  $\overline{\sqrt{\varepsilon_{2q}}},\overline{\sqrt{\varepsilon_{q}}}  \notin \langle \overline{-1}, \overline{\varepsilon_2} \rangle$.
		\end{enumerate}
 	\end{enumerate}
	
\end{lemma}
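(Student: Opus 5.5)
We will compute $\rank(A(F_1))$ with the genus-type formula of \cite[Lemma 2.4]{yue}, applied to the quadratic extension $F_1/F_1'$, where $F_1'=\QQ(\sqrt{q},\sqrt{2})$ and $F_1=F_1'(\sqrt{d})$. The $2$-class number of $F_1'$ is odd; this is classical for $\QQ(\sqrt2,\sqrt q)$ with $q\equiv 3\pmod 4$ and can also be checked with Kuroda's class number formula. Hence
\[
\rank(A(F_1))=t_{F_1/F_1'}-1-e_{F_1/F_1'},
\]
where $2^{e_{F_1/F_1'}}=[E_{F_1'}:E_{F_1'}\cap N_{F_1/F_1'}(F_1^\times)]$.

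Only the primes above $r,s,t$ ramify in $F_1/F_1'$; no infinite prime ramifies, since $d>0$ and everything is real. So $t_{F_1/F_1'}$ is the number of primes of $F_1'$ above $r,s,t$, and we get it from the decomposition of each $p\in\{r,s,t\}$ in $\QQ(\sqrt q,\sqrt2)$. The prime $p$ splits into $1+\bigl[(\tfrac qp)=1\bigr]+\bigl[(\tfrac 2p)=1\bigr]+\bigl[(\tfrac {2q}p)=1\bigr]$ primes: $4$ if both symbols are $1$, and $2$ if exactly one of $(\tfrac qp),(\tfrac2p),(\tfrac{2q}p)$ equals $1$.

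By hypothesis $r,s\not\equiv1\pmod 8$ and $(\tfrac qr)=1$, and Lemma~\ref{lemma1}-2) gives the sub-cases.
\begin{itemize}
\item Case $(\tfrac qs)=-(\tfrac qt)=1$: each of $r,s$ gives $2$ primes (or $4$ if $\equiv 7\pmod 8$), and $t$ gives $2$ or $4$. Thus $t_{F_1/F_1'}$ lies between $6$ and $12$, and rank $3$ forces $e_{F_1/F_1'}=t_{F_1/F_1'}-4$.
\item Case $(\tfrac qs)=(\tfrac qt)=1$: similarly, at least $6$ ramified primes.
\item Case $(\tfrac qs)=(\tfrac qt)=-1$: this needs $r\equiv1\pmod 8$, which is excluded.
\end{itemize}
Since $E_{F_1'}$ modulo squares has rank $4$, generated by $-1,\varepsilon_2,\sqrt{\varepsilon_q},\sqrt{\varepsilon_{2q}}$ (the standard unit group of $\QQ(\sqrt2,\sqrt q)$, up to the usual choice of the fourth unit), we have $e\le 4$. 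Hence $t_{F_1/F_1'}\le 8$, which already eliminates many congruence patterns, for example two of $r,s,t$ being $\equiv7\pmod 8$ together with certain symbol conditions.

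The core step is computing $e_{F_1/F_1'}$ as the rank of the image of $\langle -1,\varepsilon_2,\sqrt{\varepsilon_q},\sqrt{\varepsilon_{2q}}\rangle$ under the map to $\{\pm1\}^{t_{F_1/F_1'}}$ given by the norm residue symbols $\bigl(\tfrac{u,d}{\mathcal P}\bigr)$. By the Hasse norm theorem, a unit is a norm exactly when all these local symbols are trivial. At primes $\mathcal P$ above $r$, the symbol $\bigl(\tfrac{u,d}{\mathcal P}\bigr)$ reduces to $\bigl(\tfrac{u,r}{\mathcal P}\bigr)$, because $d/r$ is a unit at $\mathcal P$ and $u$ is a unit. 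Lemma~\ref{chemseddin} gives exactly these values for $r\equiv3,5\pmod 8$ (both signs of $(\tfrac qr)$), for $r\equiv 1,7\pmod8$ with $(\tfrac qr)=-1$, and for $r\equiv7\pmod8$ with $(\tfrac qr)=1$. We also use the product formula over conjugate primes, which means we only need one symbol per orbit, plus the conjugate pair $\mathcal R,\mathcal R'$ in the totally split case.

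We then go through the residues $r,s,t \bmod 8$ and $q\bmod 8$ (with $\gamma=0,1$). In each case we write down the $4\times t_{F_1/F_1'}$ sign matrix and read off its rank, checking whether $e_{F_1/F_1'}=t_{F_1/F_1'}-4$. For example, with $r\equiv s\equiv3\pmod8$, $(\tfrac qr)=(\tfrac qs)=1$ and $t\equiv 5\pmod 8$ with $(\tfrac qt)=-1$, we get $t_{F_1/F_1'}=6$. The rows for $-1$ and $\varepsilon_2$ are then nontrivial at $r$, and $\sqrt{\varepsilon_q}$ gives $(\tfrac{-1}{r})(\tfrac2r)=1$. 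One has to check that $e=2$.

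The main obstacle is the cases containing a prime $\equiv7\pmod 8$ that splits completely, i.e.\ with $(\tfrac q{\cdot})=1$. There Lemma~\ref{chemseddin}-3) does not give explicit values for $\sqrt{\varepsilon_q},\sqrt{\varepsilon_{2q}}$. When $q\equiv 7\pmod 8$, part b) forces these units to be in $\langle\overline{-1},\overline{\varepsilon_2}\rangle$, so the rank is too small and those cases drop out. When $q\equiv3\pmod8$, part a) only guarantees that at least one of them is nontrivial. So the condition $e=t_{F_1/F_1'}-4$ cannot be decided by congruences alone, and we will state it as the independence conditions on $\overline{\sqrt{\varepsilon_{2q}}}$ and $\overline{\sqrt{\varepsilon_q}}$ appearing in 2c), 2d) and 2g). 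Collecting the surviving cases gives the list in the statement.
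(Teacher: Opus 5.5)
Your framework is the paper's: Yue's formula for $F_1/F_1'$ with $F_1'=\QQ(\sqrt2,\sqrt q)$ of odd class number, the count of ramified primes from the splitting of $r,s,t$ in $F_1'$, and $e_{F_1/F_1'}$ read off from the Hilbert symbols of Lemma~\ref{chemseddin}, with the undecided $q\equiv 3\pmod 8$ cases recorded as unit conditions. But the substance of the lemma is the case-by-case computation, which you do not carry out, and some of the values you do commit to are wrong.

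For $r\equiv 3\pmod 8$ with $\left(\frac{q}{r}\right)=1$, the prime $r$ is not totally decomposed in $F_1'$. So $\left(\frac{-1,d}{\mathcal R}\right)=1$ by the opening assertion of Lemma~\ref{chemseddin}; indeed $-1$ is a square in the residue field $\mathbb F_{r^2}$. In your example only the $\varepsilon_2$ row is nontrivial at $r$. Likewise, a prime $t$ with $\left(\frac{q}{t}\right)=-1$ never splits completely in $F_1'$. It always contributes exactly two primes, so $t_{F_1/F_1'}\in\{6,10\}$ in case 1).

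The genuine gap is how you dispose of the cases $q\equiv 7\pmod 8$ containing a totally split prime $p\equiv 7\pmod 8$. Lemma~\ref{chemseddin}-3)b) only concerns the symbols at the four primes above $p$. When $d=rst$ has other prime factors, it does not place $\overline{\sqrt{\varepsilon_q}}$ or $\overline{\sqrt{\varepsilon_{2q}}}$ in $\langle\overline{-1},\overline{\varepsilon_2}\rangle$. Take $q\equiv 7$, $r\equiv 3$, $s\equiv 5$, $t\equiv 7\pmod 8$ with all $\left(\frac{q}{\cdot}\right)=1$. Then $\left(\frac{\sqrt{\varepsilon_q},d}{\mathcal S}\right)=-1$, while $-1$ and $\varepsilon_2$ are trivial at $\mathcal S$, so $\overline{\sqrt{\varepsilon_q}}\notin\langle\overline{-1},\overline{\varepsilon_2}\rangle$. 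The paper indeed finds the quotient $\langle\overline{-1},\overline{\varepsilon_2},\overline{\sqrt{\varepsilon_q}}\rangle$, so $e=3$, not $\le 2$.

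To exclude $e=4$ you must exhibit an actual dependency among the four global images. Part 3)b) as stated would still allow the $t$-components of $\sqrt{\varepsilon_{2q}}$ to be trivial, and then $e=4$ would be possible, so it is not enough. What works is the following. For $q\equiv 7$, write $\sqrt{\varepsilon_q}=\alpha/\sqrt2$ and $\sqrt{\varepsilon_{2q}}=\beta/\sqrt2$, with $\alpha\in\QQ(\sqrt q)$ and $\beta\in\QQ(\sqrt{2q})$ both of norm $2$. Since $\left(\frac{2}{p}\right)=1$ and $\left(\frac{-1}{p}\right)=-1$, their symbols at the primes above $p$ depend only on the sign of $\sqrt2$ in the corresponding embedding into $\QQ_p$, and flip with it. Hence they coincide with the symbols of $\pm\varepsilon_2$. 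In the example, $\varepsilon_2\sqrt{\varepsilon_{2q}}$ is then trivial at $\mathcal R$ and $\mathcal S$ and has the symbols of $\pm1$ above $t$. This gives $e=3$ and $\rank(A(F_1))=4$. Each remaining case of this type needs a check of the same kind.
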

\begin{proof}
	Let $F'_{1}:=\QQ(\sqrt{q},\sqrt{2})$. Throughout this proof, we use the norm computations established in Lemma \ref{chemseddin}. Assume that $r$, $s$ and $t$ satisfy the conditions of  Lemma \ref{lemma1}-$2)$. We distinguish the following cases:
		\begin{enumerate}[$1)$] 
		\item Assume that $ \left( \frac{q}{s}\right)=\left( \frac{q}{r}\right)=-\left( \frac{q}{t}\right)=1 $ $($cf. Lemma \ref{lemma1}-$2-b)$$)$. we discuss the following subcases:
		
				\begin{enumerate}[$A)$] 
		\item  If $r\equiv s \equiv 3 \pmod 8$, then 
		$\rank(A(F_{1}))=t_{F_{1}/F'_{1}}-1-e_{F_{1}/F'_{1}}=5-e_{F_{1}/F'_{1}}$. We have
		
		\[
		\begin{aligned}
			\left( \frac{-1,d}{\mathcal{R}} \right) &=\left( \frac{-1,d}{\mathcal{S}} \right)= 1\\
			\left( \frac{\varepsilon_2,d}{\mathcal{R}}  \right) &=\left( \frac{\varepsilon_2,d}{\mathcal{S}}  \right)= -1\\
			\left( \frac{\sqrt{\varepsilon_q},d}{\mathcal{R}} \right) &=\left( \frac{\sqrt{\varepsilon_q},d}{\mathcal{S}} \right)= 1\\
			\left( \frac{\sqrt{\varepsilon_{2q}},d}{\mathcal{R}} \right) &=\left( \frac{\sqrt{\varepsilon_{2q}},d}{\mathcal{S}} \right)= \left( \frac{(-1)^{\gamma+1}}{r} \right).
		\end{aligned}
		\]
		Moreover, we have the following cases for $t$:
		\[
		\begin{array}{l|l}
			\begin{aligned}
				t \equiv 3 \text{ or } 5 \pmod{8}\\
				\left( \frac{-1,d}{\mathcal{T}} \right) &= 1\\
				\left( \frac{\varepsilon_2,d}{\mathcal{T}} \right) &= \left( \frac{-1}{t} \right)\\
				\left( \frac{\sqrt{\varepsilon_q},d}{\mathcal{T}} \right) &= \left( \frac{(-1)^{\gamma+1}}{t} \right)\\
				\left( \frac{\sqrt{\varepsilon_{2q}},d}{\mathcal{T}} \right) &= \left( \frac{-1}{t} \right)\left( \frac{2}{t} \right)
			\end{aligned}
			&
			\begin{aligned}
				t \equiv 1 \text{ or } 7 \pmod{8}\\
				\left( \frac{-1,d}{\mathcal{T}} \right) &= 1\\
				\left( \frac{\varepsilon_2,d}{\mathcal{T}} \right) &= 1\\
				\left( \frac{\sqrt{\varepsilon_q},d}{\mathcal{T}} \right) &= \left( \frac{(-1)^{\gamma}}{t} \right)\\
				\left( \frac{\sqrt{\varepsilon_{2q}},d}{\mathcal{T}} \right) &= \left( \frac{(-1)^{\gamma}}{t} \right)
			\end{aligned}
		\end{array}
		\]
		
		where $\mathcal{R}$ (resp. $\mathcal{S}$, ${\mathcal{T}}$) is a prime ideal of $F'_{1}$ above $r$ (resp. $s$, $t$). Now, we examine each case of $t$ individually:
			\begin{enumerate}[\rm $\bullet$]
		\item Let $t \equiv 5 \pmod 8$. Using the above norm residue symbols we check that $$
		{E_{F'_1}}/({E_{F'_1}\cap N_{F_1/F'_1}(F_1)})=\{ \overline{1}, \overline{\varepsilon_2}, \overline{\sqrt{\varepsilon_{2q}}}, \overline{\varepsilon_2\sqrt{\varepsilon_{2q}}}\}.$$
		Thus, $e_{F_{1}/F'_{1}}=2$ and so $\rank(A(F_{1})=3$.
		
		\item  Let $t \equiv 3 \pmod 8$, then using the above norm residue symbols we check that 
		$
		\left( \frac{\sqrt{\varepsilon_q},d}{\mathcal{T}} \right) = \left( \frac{(-1)^{\gamma+1}}{t} \right)=-1 \text{ or } 1 
		$ according to $q \equiv 7\pmod 8$ or not. Likewise, 
		$\left( \frac{\sqrt{\varepsilon_{2q}},d}{\mathcal{R}} \right) = \left( \frac{(-1)^{\gamma+1}}{r} \right)=-1 \text{ or } 1 
		$ according to $q \equiv 7\pmod 8$ or not, and in this case $\rank(A(F_{1}) \in \{ 3,4\}.$
		
		More precisely, suppose we have $q \equiv 3 \pmod 8$, then 
		$ {E_{F'_1}}/({E_{F'_1}\cap N_{F_1/F'_1}(F_1)})=\{ \overline{1}, \overline{\varepsilon_2} \}$, thus, $e_{F_{1}/F'_{1}}=1$, hence $\rank(A(F_{1})=4$. Now suppose $q \equiv 7 \pmod 8$, then we have 
		$ {E_{F'_1}}/({E_{F'_1}\cap N_{F_1/F'_1}(F_1)})=\{ \overline{1}, \overline{\epsilon_2}, \overline{\sqrt{\varepsilon_q}}, \overline{\varepsilon_{2}\sqrt{\varepsilon_q} \}}$, thus, $e_{F_{1}/F'_{1}}=2$. Hence $\rank(A(F_{1})=3$.
		
		\item  Let $t \equiv 7\pmod 8$, in this case we have $\rank(A(F_{1}) \in \{ 3,4\}.$  More precisely, if $q \equiv 3 \pmod 8$, we have 
		$ {E_{F'_1}}/({E_{F'_1}\cap N_{F_1/F'_1}(F_1)})=\{ \overline{1}, \overline{\epsilon_2}, \overline{\sqrt{\varepsilon_q}}, \overline{\varepsilon_{2}\sqrt{\varepsilon_q} \}}$, thus $e_{F_{1}/F'_{1}}=2$. Hence $\rank(A(F_{1})=3$. Now  if $q \equiv 7 \pmod 8$, then  $
		\left( \frac{\sqrt{\varepsilon_q},d}{\mathcal{T}} \right)=\left( \frac{\sqrt{\varepsilon_{2q}},d}{\mathcal{T}} \right) = \left( \frac{(-1)^{\gamma}}{t} \right)=1
		$, $
		\left( \frac{\sqrt{\varepsilon_{2q}},d}{\mathcal{R}} \right) = \left( \frac{(-1)^{\gamma+1}}{r} \right)=-1  
		$ and $\left( \frac{\varepsilon_{2}\sqrt{\varepsilon_{2q}},d}{\mathcal{R}} \right) =\left( \frac{\varepsilon_{2},d}{\mathcal{R}} \right) \left( \frac{\sqrt{\varepsilon_{2q}},d}{\mathcal{R}} \right)  =1
		$, hence $ {E_{F'_1}}/({E_{F'_1}\cap N_{F_1/F'_1}(F_1)})=\{ \overline{1}, \overline{\epsilon_2} \}$ and so $e_{F_{1}/F'_{1}}=1$. Hence $\rank(A(F_{1})=4$.
		
		\item  Let $t \equiv 1\pmod 8$, in this case we have $
		\left( \frac{\sqrt{\varepsilon_q},d}{\mathcal{T}} \right)=\left( \frac{\sqrt{\varepsilon_{2q}},d}{\mathcal{T}} \right) = \left( \frac{(-1)^{\gamma}}{t} \right)=1
		$, and $\left( \frac{\sqrt{\varepsilon_{2q}},d}{\mathcal{R}} \right)=\left( \frac{(-1)^{\gamma+1}}{r} \right)= 1 \text{ or } -1$ according to $q \equiv 3 \pmod 8$ or not, and we check that $\left( \frac{\varepsilon_{2}\sqrt{\varepsilon_{2q}},d}{\mathcal{R}} \right) =\left( \frac{\varepsilon_{2},d}{\mathcal{R}} \right) \left( \frac{\sqrt{\varepsilon_{2q}},d}{\mathcal{R}} \right)  =1
		$. Hence $ {E_{F'_1}}/({E_{F'_1}\cap N_{F_1/F'_1}(F_1)})=\{ \overline{1}, \overline{\epsilon_2} \}$ and so $e_{F_{1}/F'_{1}}=1$, and as a result we have $\rank(A(F_{1})=4$.
			\end{enumerate}
		\item  If  $r\equiv s \equiv 5 \pmod 8$, it follows, by  Lemma \ref{chemseddin}, that  we have
	 	\[
		\begin{aligned}
			\left( \frac{-1,d}{\mathcal{R}} \right) &=\left( \frac{-1,d}{\mathcal{S}} \right)= 1,\\
			\left( \frac{\varepsilon_2,d}{\mathcal{R}}  \right) &=\left( \frac{\varepsilon_2,d}{\mathcal{S}}  \right)= 1,\\
			\left( \frac{\sqrt{\varepsilon_q},d}{\mathcal{R}} \right) &=\left( \frac{\sqrt{\varepsilon_q},d}{\mathcal{S}} \right)= -1,\\
			\left( \frac{\sqrt{\varepsilon_{2q}},d}{\mathcal{R}} \right) &=\left( \frac{\sqrt{\varepsilon_{2q}},d}{\mathcal{S}} \right)= 1.
		\end{aligned}
		\]
	More precisely, we distinguish the following sub-cases:
	
		\begin{enumerate}[\rm $\bullet$]
		\item  Let $t \equiv 3\pmod 8$, then $\left( \frac{\varepsilon_2,d}{\mathcal{T}} \right)=\left( \frac{\sqrt{\varepsilon_q},d}{\mathcal{R}} \right)=-1$  and $\left( \frac{\sqrt{\varepsilon_{2q}},d}{\mathcal{T}} \right) =1
		$. Therefore, we have $$ {E_{F'_1}}/({E_{F'_1}\cap N_{F_1/F'_1}(F_1)})=\{ \overline{1}, \overline{\epsilon_2}, \overline{\varepsilon_q},\overline{\varepsilon_2\sqrt{\varepsilon_q}} \},$$
		 and so $e_{F_{1}/F'_{1}}=2$, and $\rank(A(F_{1})=3$.
		
		\item  Let $t \equiv 5\pmod 8$, then  $\left( \frac{\varepsilon_2,d}{\mathcal{T}} \right)=1$, $\left( \frac{\sqrt{\varepsilon_q},d}{\mathcal{R}} \right)=-1$ and $\left( \frac{\sqrt{\varepsilon_{2q}},d}{\mathcal{T}} \right) =-1
		$. Thus, $ {E_{F'_1}}/({E_{F'_1}\cap N_{F_1/F'_1}(F_1)})=\{ \overline{1}, \overline{\sqrt{\varepsilon_q}}, \overline{\sqrt{\varepsilon_{2q}}},\overline{\sqrt{\varepsilon_q}\sqrt{\varepsilon_{2q}}} \}$. Therefore, $e_{F_{1}/F'_{1}}=2$  and $\rank(A(F_{1})=3$.
	\end{enumerate}
		\item  If $r\equiv s \equiv 7 \pmod 8$, we deduce that $\rank(A(F_{1}))=t_{F_{1}/F'_{1}}-1-e_{F_{1}/F'_{1}}=9-e_{F_{1}/F'_{1}} >3$.
	\end{enumerate}

		\item  Assume that $ \left( \frac{q}{s}\right)=\left( \frac{q}{r}\right)=\left( \frac{q}{t}\right)=1$ $($cf. Lemma \ref{lemma1}-$2-c)$$)$.
	 	We distinguish two cases according to the congruence of $q$ modulo $8$:
		
		\ding{229} Suppose that $q\equiv 3 \pmod 8$. We have the following sub-cases according to the congruence classes of
		$r$, $s$, and $t$ modulo $8$:
		\begin{enumerate}[\rm $\bullet$]
		\item Let $r\equiv t \equiv 3 \pmod 8$ and $s\equiv 5 \pmod 8$. We have, $\rank(A(F_{1}))=t_{F_{1}/F'_{1}}-1-e_{F_{1}/F'_{1}}=6-1-e_{F_{1}/F'_{1}}=5-e_{F_{1}/F'_{1}}$, and it is equal to $3$ if and only if $e_{F_{1}/F'_{1}}=2$. Moreover, we have the following  Hilbert symbols:
		\[
		\begin{aligned}
			\left(\frac{-1,d}{\mathcal{R}}\right)
			&= \left(\frac{-1,d}{\mathcal{S}}\right)
			= \left(\frac{-1,d}{\mathcal{T}}\right)
			= 1 \\
			\left(\frac{\varepsilon_2,d}{\mathcal{R}}\right)
			&= -1 \\
			\left(\frac{\sqrt{\varepsilon_q},d}{\mathcal{R}}\right)
			&= 1, 
			\left(\frac{\sqrt{\varepsilon_q},d}{\mathcal{S}}\right)
			= -1 \\
			\left(\frac{\sqrt{\varepsilon_{2q}},d}{\mathcal{R}}\right)
			&= \left(\frac{\sqrt{\varepsilon_{2q}},d}{\mathcal{S}}\right)
			= \left(\frac{\sqrt{\varepsilon_{2q}},d}{\mathcal{T}}\right)
			= 1.
		\end{aligned}
		\]
	
		Hence $ {E_{F'_1}}/({E_{F'_1}\cap N_{F_1/F'_1}(F_1)})=\{ \overline{1}, \overline{\varepsilon_2}, \overline{\sqrt{\varepsilon_{q}}},\overline{\varepsilon_2\sqrt{\varepsilon_{q}}} \}$ and so $e_{F_{1}/F'_{1}}=2$. Thus $\rank(A(F_{1})=3$.
		
		\medskip

			\item Let $r\equiv 3 \pmod 8$ and $s\equiv t \equiv 5 \pmod 8$. As in the previous sub-case, we get:
		\[
		\begin{aligned}
			\left(\frac{-1,d}{\mathcal{R}}\right)
			&= \left(\frac{-1,d}{\mathcal{S}}\right)
			= \left(\frac{-1,d}{\mathcal{T}}\right)
			= 1,\\
			\left(\frac{\sqrt{\varepsilon_{2q}},d}{\mathcal{R}}\right)
			&= \left(\frac{\sqrt{\varepsilon_{2q}},d}{\mathcal{S}}\right)
			= \left(\frac{\sqrt{\varepsilon_{2q}},d}{\mathcal{T}}\right)
			= 1,\\
			\left(\frac{\varepsilon_2,d}{\mathcal{R}}\right)
			&= -1,\\
			\left(\frac{\sqrt{\varepsilon_q},d}{\mathcal{T}}\right)
			&= -1, 
			\left(\frac{\sqrt{\varepsilon_q},d}{\mathcal{R}}\right)
			= 1.
		\end{aligned}
		\]
       
		Hence $ {E_{F'_1}}/({E_{F'_1}\cap N_{F_1/F'_1}(F_1)})=\{ \overline{1}, \overline{\varepsilon_2}, \overline{\sqrt{\varepsilon_{q}}},\overline{\varepsilon_2\sqrt{\varepsilon_{q}}} \}$ and so $e_{F_{1}/F'_{1}}=2$. Hence $\rank(A(F_{1})=3$.
		
		\medskip

		\item  Now let $r\equiv 3 \pmod 8$, $s\equiv 5 \pmod 8$ and $t \equiv 7 \pmod 8$. Then
		$\rank(A(F_{1}))=t_{F_{1}/F'_{1}}-1-e_{F_{1}/F'_{1}}=8-1-e_{F_{1}/F'_{1}}= 7-e_{F_{1}/F'_{1}}$. Notice that , by Lemma \ref{chemseddin} , we have:

		$$	\left( \frac{-1,d}{\mathcal{R}} \right) = 1, ~~
		\left( \frac{\varepsilon_2,d}{\mathcal{R}} \right) =  -1, ~~
		\left( \frac{\sqrt{\varepsilon_{2q}},d}{\mathcal{R}} \right) =1, ~~
		\left( \frac{\sqrt{\varepsilon_{q}},d}{\mathcal{R}} \right) =1$$
		
 and

		$$	\left( \frac{-1,d}{\mathcal{S}} \right) = 1, ~~
		\left( \frac{\varepsilon_2,d}{\mathcal{S}} \right) =1, ~~
		\left( \frac{\sqrt{\varepsilon_{2q}},d}{\mathcal{S}} \right) =1, ~~
		\left( \frac{\sqrt{\varepsilon_{q}},d}{\mathcal{S}} \right) =  -1.$$
		
	 Moreover, we have
	 	$	\left( \frac{-1,d}{\mathcal{T}} \right) = \left( \frac{-1,d}{\mathcal{T'}} \right)=-1$ and
		$\left( \frac{\varepsilon_2,d}{\mathcal{T}} \right) =- \left( \frac{\varepsilon_2,d}{\mathcal{T'}} \right)=1$.
		We notice that $-1$, $\varepsilon_2 $ and $\sqrt{\varepsilon_q}$ are not norms and linearly independent  in the quotient $ {E_{F'_1}}/({E_{F'_1}\cap N_{F_1/F'_1}(F_1)})$, and based on Lemma \ref{chemseddin}-$3)$, there exist at least $u \in \{ \sqrt{\varepsilon_q}, \sqrt{\varepsilon_{2q}}\}$ such that [$\left( \frac{u,d}{\mathcal{T}} \right)=-1$ or $\left( \frac{u,d}{\mathcal{T'}} \right)=-1$]  and $\overline{u} \notin \{ \overline{-1}, \overline{\varepsilon_{2}}\} $ in the quotient $ {E_{F'_1}}/({E_{F'_1}\cap N_{F_1/F'_1}(F_1)})$.
		
	It follows that if  $\sqrt{\varepsilon_{2q}}$ is not a norm, then $e_{F_{1}/F'_{1}}=4$ and so $\rank(A(F_{1})=3$. Otherwise, $e_{F_{1}/F'_{1}}=3$ and $\rank(A(F_{1})=4$.
		
		\medskip
		
		\item  If $r\equiv 3 \pmod 8$, $s\equiv 7 \pmod 8$, $t \equiv 7 \pmod 8$, then
		$\rank(A(F_{1}))=t_{F_{1}/F'_{1}}-1-e_{F_{1}/F'_{1}}=10-1-e_{F_{1}/F'_{1}}= 9-e_{F_{1}/F'_{1}}$. We compute the Hilbert symbols:

		$$	\left( \frac{-1,d}{\mathcal{R}} \right) = 1, ~~
		\left( \frac{\varepsilon_2,d}{\mathcal{R}} \right)  =-1, ~~
		\left( \frac{\sqrt{\varepsilon_{2q}},d}{\mathcal{R}} \right) =1, ~~
		\left( \frac{\sqrt{\varepsilon_{q}},d}{\mathcal{R}} \right) =1$$,

		$$	\left( \frac{-1,d}{\mathcal{S}} \right) = \left( \frac{-1,d}{\mathcal{S'}} \right)=-1, ~~
		\left( \frac{\varepsilon_2,d}{\mathcal{S}} \right) =- \left( \frac{\varepsilon_2,d}{\mathcal{S'}} \right)=1$$
		
	and
	
		$$\left( \frac{-1,d}{\mathcal{T}} \right) = \left( \frac{-1,d}{\mathcal{T'}} \right)=-1, ~~
		\left( \frac{\varepsilon_2,d}{\mathcal{T}} \right) =- \left( \frac{\varepsilon_2,d}{\mathcal{T'}} \right)=1.$$
		
       In this case $\rank(A(F_{1})) \in \{5,6\}$
		
		\medskip
		
		\item If $r\equiv 3 \pmod 8$, $s\equiv 7 \pmod 8$, $t \equiv 3 \pmod 8$, then
		$\rank(A(F_{1}))=t_{F_{1}/F'_{1}}-1-e_{F_{1}/F'_{1}}=8-1-e_{F_{1}/F'_{1}}= 7-e_{F_{1}/F'_{1}}$, and we have the following  Hilbert symbols:

		$$	\left( \frac{-1,d}{\mathcal{R}} \right) = 1, ~~
		\left( \frac{\varepsilon_2,d}{\mathcal{R}} \right) =-1, ~~
		\left( \frac{\sqrt{\varepsilon_{2q}},d}{\mathcal{R}} \right) =1, ~~
		\left( \frac{\sqrt{\varepsilon_{q}},d}{\mathcal{R}} \right) =1,$$

		$$	\left( \frac{-1,d}{\mathcal{S}} \right) = \left( \frac{-1,d}{\mathcal{S'}} \right)=-1, ~~	\left( \frac{\varepsilon_2,d}{\mathcal{S}} \right) =- \left( \frac{\varepsilon_2,d}{\mathcal{S'}} \right)=1$$
		and	
		$$	\left( \frac{-1,d}{\mathcal{T}} \right) = 1, ~~
		\left( \frac{\varepsilon_2,d}{\mathcal{T}} \right) =-1, ~~
		\left( \frac{\sqrt{\varepsilon_{2q}},d}{\mathcal{T}} \right)=1, ~~
		\left( \frac{\sqrt{\varepsilon_{q}},d}{\mathcal{T}} \right) =1.$$

		In this case $\rank(A(F_{1})) \in \{3,4\}$.  We check that it is  equal to $3$ if and only if $e_{F_{1}/F'_{1}}=4$. This case arises when $q\equiv 3 \pmod 8$ and both $\sqrt{\varepsilon_{q}}$ and $\sqrt{\varepsilon_{2q}}$ are not norms.
		
		\medskip
		
		\item If $r\equiv 5 \pmod 8$, $s\equiv 7 \pmod 8$, $t \equiv 5 \pmod 8$, then 
		$\rank(A(F_{1}))=t_{F_{1}/F'_{1}}-1-e_{F_{1}/F'_{1}}=8-1-e_{F_{1}/F'_{1}}= 7-e_{F_{1}/F'_{1}}$. We have

		$$\left( \frac{-1,d}{\mathcal{R}} \right) = 1, ~~
		\left( \frac{\varepsilon_2,d}{\mathcal{R}} \right)=1, ~~
		\left( \frac{\sqrt{\varepsilon_{2q}},d}{\mathcal{R}} \right)=1, ~~
		\left( \frac{\sqrt{\varepsilon_{q}},d}{\mathcal{R}} \right) )=-1,$$

		$$	\left( \frac{-1,d}{\mathcal{S}} \right) = \left( \frac{-1,d}{\mathcal{S'}} \right)=-1, ~~
		\left( \frac{\varepsilon_2,d}{\mathcal{S}} \right) =- \left( \frac{\varepsilon_2,d}{\mathcal{S'}} \right)=1$$
	 and
		$$	\left( \frac{-1,d}{\mathcal{T}} \right) = 1, ~~
		\left( \frac{\varepsilon_2,d}{\mathcal{T}} \right)=1, ~~
		\left( \frac{\sqrt{\varepsilon_{2q}},d}{\mathcal{T}} \right)=1, ~~
		\left( \frac{\sqrt{\varepsilon_{q}},d}{\mathcal{T}} \right) =-1.$$
		
		In this case $\rank(A(F_{1})) \in \{3,4\}$.
		
		\medskip

		\item If $r\equiv 5 \pmod 8$, $s\equiv 7 \pmod 8$, $t \equiv 7 \pmod 8$, then
		$\rank(A(F_{1}))=t_{F_{1}/F'_{1}}-1-e_{F{1}/F'_{1}}=10-1-e_{F{1}/F'_{1}}= 9-e_{F{1}/F'_{1}}$. Moreover, we have

		$$	\left( \frac{-1,d}{\mathcal{R}} \right) = 1, ~~
		\left( \frac{\varepsilon_2,d}{\mathcal{R}} \right)=1, ~~
		\left( \frac{\sqrt{\varepsilon_{2q}},d}{\mathcal{R}} \right)=1, ~~
		\left( \frac{\sqrt{\varepsilon_{q}},d}{\mathcal{R}} \right) =-1,$$

		$$	\left( \frac{-1,d}{\mathcal{S}} \right) = \left( \frac{-1,d}{\mathcal{S'}} \right)=-1, ~~
		\left( \frac{\varepsilon_2,d}{\mathcal{S}} \right) =- \left( \frac{\varepsilon_2,d}{\mathcal{S'}} \right)=1$$ 
	and		
	 	$$	\left( \frac{-1,d}{\mathcal{T}} \right) = \left( \frac{-1,d}{\mathcal{T'}} \right)=-1, ~~
		\left( \frac{\varepsilon_2,d}{\mathcal{T}} \right) =- \left( \frac{\varepsilon_2,d}{\mathcal{T'}} \right)=1,$$

		In this case $\rank(A(F_{1})) \in \{5,6\}$.
	\end{enumerate}
		\ding{229}	Suppose $q\equiv 7 \pmod 8$. We have: 
		\begin{enumerate}[\rm $\bullet$]
		\item If $r\equiv t \equiv 3 \pmod 8$, $s\equiv 5 \pmod 8$,
		then  $\rank(A(F_{1}))=t_{F_{1}/F'_{1}}-1-e_{F_{1}/F'_{1}}=6-1-e_{F_{1}/F'_{1}}=5-e_{F_{1}/F'_{1}}$, and it is  equal to $3$ if and only if $e_{F_{1}/F'_{1}}=2$. We have:
		
		$$\left( \frac{-1,d}{\mathcal{R}} \right)=-\left( \frac{\sqrt{\varepsilon_{2q}},d}{\mathcal{R}} \right)=-\left( \frac{\varepsilon_2,d}{\mathcal{R}} \right)=\left( \frac{\sqrt{\varepsilon_q},d}{\mathcal{R}} \right)=1,$$

		$$\left( \frac{-1,d}{\mathcal{S}} \right)=\left( \frac{\sqrt{\varepsilon_{2q}},d}{\mathcal{S}} \right)=-\left( \frac{\sqrt{\varepsilon_q},d}{\mathcal{S}} \right)=1,$$ 
		and	
		$$\left( \frac{-1,d}{\mathcal{T}} \right)=-\left( \frac{\sqrt{\varepsilon_{2q}},d}{\mathcal{T}} \right)=1.
		$$
	Hence $ {E_{F'_1}}/({E_{F'_1}\cap N_{F_1/F'_1}(F_1)})=\{ \overline{1}, \overline{\varepsilon_2}, \overline{\sqrt{\varepsilon_{q}}},\overline{\varepsilon_2\sqrt{\varepsilon_{q}}} \}$ and so $e_{F_{1}/F'_{1}}=2$. Thus $\rank(A(F_{1})=3$.
		
		\item  If $r\equiv 3 \pmod 8$ and $s\equiv t \equiv 5 \pmod 8$, then 
		$$\left( \frac{-1,d}{\mathcal{R}} \right)=\left( \frac{\sqrt{\varepsilon_{2q}},d}{\mathcal{R}} \right)=\left( \frac{\sqrt{\varepsilon_q},d}{\mathcal{R}} \right)=-\left( \frac{\varepsilon_2,d}{\mathcal{R}} \right)=1,$$
		
		$$ \left( \frac{-1,d}{\mathcal{S}} \right)=\left( \frac{\sqrt{\varepsilon_{2q}},d}{\mathcal{S}} \right)=1,
		$$
		
		$$\left( \frac{\sqrt{\varepsilon_{2q}},d}{\mathcal{T}} \right)=\left( \frac{-1,d}{\mathcal{T}} \right)=-\left( \frac{\sqrt{\varepsilon_q},d}{\mathcal{T}} \right)=1.$$ 
		Hence $ {E_{F'_1}}/({E_{F'_1}\cap N_{F_1/F'_1}(F_1)})=\{ \overline{1}, \overline{\varepsilon_2}, \overline{\sqrt{\varepsilon_{2q}}},\overline{\varepsilon_2\sqrt{\varepsilon_{q}}} \}$ and so $e_{F_{1}/F'_{1}}=2$. As a result $\rank(A(F_{1})=3$
		
		\item  If $r\equiv 3 \pmod 8$, $s\equiv 5 \pmod 8$ and $t \equiv 7 \pmod 8$, then $ {E_{F'_1}}/({E_{F'_1}\cap N_{F_1/F'_1}(F_1)})=\langle \overline{-1}, \overline{\varepsilon_2}, \overline{\sqrt{\varepsilon_{q}}} \rangle$ and so $e_{F_{1}/F'_{1}}=3$, and $\rank(A(F_{1})=4$.
		
		\item  If $r\equiv 3 \pmod 8$, $s\equiv 7 \pmod 8$ and $t \equiv 7 \pmod 8$, then
		$\rank(A(F_{1}))=t_{F_{1}/F'_{1}}-1-e_{F_{1}/F'_{1}}=10-1-e_{F_{1}/F'_{1}}= 9-e_{F_{1}/F'_{1}}>3$.
		
		\item  If $r\equiv 3 \pmod 8$, $s\equiv 7 \pmod 8$ and $t \equiv 3 \pmod 8$, then 	$\rank(A(F_{1}))=t_{F_{1}/F'_{1}}-1-e_{F_{1}/F'_{1}}=8-1-e_{F_{1}/F'_{1}}= 7-e_{F_{1}/F'_{1}}>3$

		\item  If $r\equiv 5 \pmod 8$, $s\equiv 7 \pmod 8$ and $t \equiv 5 \pmod 8$, then $ {E_{F'_1}}/({E_{F'_1}\cap N_{F_1/F'_1}(F_1)})=\langle \overline{-1}, \overline{\varepsilon_2}, \overline{\sqrt{\varepsilon_{q}}} \rangle$ and so $e_{F_{1}/F'_{1}}=3$. Thus 
		$\rank(A(F_{1}))=t_{F_{1}/F'_{1}}-1-e_{F_{1}/F'_{1}}=8-1-e_{F_{1}/F'_{1}}= 7-e_{F_{1}/F'_{1}}=4$.

		\item  If $r\equiv 5 \pmod 8$, $s\equiv 7 \pmod 8$ and $t \equiv 7 \pmod 8$, then $\rank(A(F_{1}))=t_{F_{1}/F'_{1}}-1-e_{F_{1}/F'_{1}}=10-1-e_{F_{1}/F'_{1}}= 9-e_{F_{1}/F'_{1}}>3$.
		
		\end{enumerate}
	\end{enumerate}
	
\end{proof}

\begin{lemma}\label{lemma41}
	
	Let $F:=\QQ(\sqrt{q},\sqrt{d})$ and $F_{1}:=\QQ(\sqrt{q},\sqrt{d},\sqrt{2})$ 
	where $q\equiv 3 \pmod 4$ is a prime number and $d>2$ is square-free integer relatively prime to $q$.
	Assume that $F_{1}:=\QQ(\sqrt{q},\sqrt{d},\sqrt{2})$ with $d=rstk$, where $r$, $s$, $t$ and $k$ are prime numbers, and such that $\rank(A(F))=3 $  $($cf. Lemma \ref{lemma1}-$3)$$)$. We have $\rank(A(F_{1}))>3 $.
\end{lemma}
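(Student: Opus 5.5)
We apply the rank formula of Lemma 2.2 (\cite[Lemma 2.4]{yue}) to the quadratic extension $F_1/F'_1$, with $F'_1=\QQ(\sqrt{q},\sqrt{2})$; since $h_2(F'_1)$ is odd, this gives
$$\rank(A(F_1))=t_{F_1/F'_1}-1-e_{F_1/F'_1}.$$
The unit group $E_{F'_1}$ modulo squares is generated by $-1,\varepsilon_2,\sqrt{\varepsilon_q},\sqrt{\varepsilon_{2q}}$ (or an analogous basis of rank $4$). Hence $e_{F_1/F'_1}\le 4$. It therefore suffices to show $t_{F_1/F'_1}\ge 9$, or else $t_{F_1/F'_1}\ge 8$ together with $e_{F_1/F'_1}\le 3$, and similarly at lower levels.

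The first step is to count ramified primes. Only the primes above $r,s,t,k$ ramify in $F_1/F'_1$, since $d$ is odd and $F_1=F'_1(\sqrt d)$ with $2$ handled as in the previous lemma. A prime $p$ with $\left(\frac{q}{p}\right)=-1$ is inert in $\QQ(\sqrt q)$. In $F'_1$ it splits into $2$ primes if $p\equiv \pm1\pmod 8$ and into $1$ prime otherwise. A prime with $\left(\frac qp\right)=1$ gives $4$ primes if $p\equiv\pm1\pmod 8$ and $2$ primes otherwise. In every case at least $1$ or $2$ primes lie above each of $t$ and $k$, and at least $2$ lie above each of $r$ and $s$ when these split in $\QQ(\sqrt q)$. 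We then go through the three cases of Lemma \ref{lemma1}-$3)$:
\begin{enumerate}[$a)$]
\item $\left(\frac qr\right)=\left(\frac qs\right)=1$, with either $r\equiv5\pmod 8$ and $s\equiv 3\pmod 4$, or $r\equiv 3\pmod 8$ and $s\equiv7\pmod8$.
\item $\left(\frac qr\right)=-\left(\frac qs\right)=1$ and $r\not\equiv1\pmod 8$.
\item $\left(\frac qr\right)=\left(\frac qs\right)=-1$.
\end{enumerate}
In case $a)$ we get $t_{F_1/F'_1}\ge 2+2+1+1=6$, and $t_{F_1/F'_1}$ grows when $s\equiv 7$ or when $t,k\equiv\pm1\pmod 8$.

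The second step is to bound $e_{F_1/F'_1}$ using the Hilbert symbols of Lemma \ref{chemseddin}, exactly as in the proof of Lemma \ref{lemma4}. The key observation should be this: for primes $p$ with $\left(\frac qp\right)=-1$ and $p\equiv 3,5\pmod 8$, there is a single prime above $p$, and the symbols of $-1,\varepsilon_2,\sqrt{\varepsilon_q},\sqrt{\varepsilon_{2q}}$ at it are given by Lemma \ref{chemseddin}-$1)$. When $p\equiv\pm1\pmod 8$ the symbols of $\varepsilon_2$ are trivial by Lemma \ref{chemseddin}-$2)$. Comparing, the image of the unit group in $\prod_{\mathfrak P}\{\pm1\}$ should have dimension at most $t_{F_1/F'_1}-5$ in each subcase. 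Product formula considerations (the product of all symbols of a unit is $1$, and symbols at conjugate primes are related) cut the available dimension. Equivalently, we show that the number of independent non-norm units is at most $t_{F_1/F'_1}-5$, forcing $\rank(A(F_1))\ge 4$.

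The main obstacle is the small cases, where $t_{F_1/F'_1}$ is only $6$ or $7$. An example is case $c)$ with all four primes $\equiv3,5\pmod8$, where $t_{F_1/F'_1}=4$. Here the bound $\rank\ge4$ cannot come from counting alone. I would first check whether such configurations actually yield $t_{F_1/F'_1}<8$ with $e$ large. If they do, the argument must use the explicit symbol values, which depend only on $p\bmod 8$ and on $\gamma$. Since every prime of $d$ here has $\left(\frac qp\right)=-1$, the symbols of $\sqrt{\varepsilon_q}$ and $\sqrt{\varepsilon_{2q}}$ at these primes are fixed functions of $p$. Products such as $\varepsilon_2\sqrt{\varepsilon_{2q}}$ then have trivial symbols at all primes, which caps $e_{F_1/F'_1}$. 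I expect this tabulation, organised by the residues of $r,s,t,k$ modulo $8$ and by $q\bmod 8$, to be the bulk of the work. If some subcase gives exactly rank $3$, the statement would need refinement, so I would verify those subcases with particular care.
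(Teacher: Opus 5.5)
\textbf{There is a genuine gap.} Your framework (Yue's formula for $F_1/F'_1$, counting ramified primes, bounding $e_{F_1/F'_1}$ by Hilbert symbols) is the paper's. However, the prime count everything rests on is wrong.

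An odd prime $p\neq q$ is never inert in the biquadratic field $F'_1=\QQ(\sqrt q,\sqrt 2)$. Since $p$ is unramified, its decomposition group is cyclic, hence of order at most $2$ in $\mathrm{Gal}(F'_1/\QQ)\simeq(\ZZ/2\ZZ)^2$. Concretely, if $\left(\frac qp\right)=-1$ and $p\equiv 3,5\pmod 8$, then $\left(\frac{2q}{p}\right)=1$. So $p$ splits in $\QQ(\sqrt{2q})$ and has two primes above it in $F'_1$, not one. This is why Lemma \ref{chemseddin} evaluates $\left(\frac{\sqrt{\varepsilon_{2q}},r}{\mathcal R}\right)$ through $\mathcal R_{\QQ(\sqrt{2q})}$.

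Hence each of $r,s,t,k$ contributes exactly $2$ or $4$ ramified primes, and $t_{F_1/F'_1}\ge 8$ always. It is not $\ge 6$ in case $a)$ nor $=4$ in case $c)$. The miscount is fatal to your plan. With $t_{F_1/F'_1}=4$ the formula gives $\rank(A(F_1))=3-e_{F_1/F'_1}\le 3$. Capping $e_{F_1/F'_1}$ by symbol tabulation could then never yield rank $\ge 4$, and your plan would end by declaring the lemma false there, as your closing hedge anticipates. The claim that the unit image has dimension at most $t_{F_1/F'_1}-5$ is also left unproved.

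\textbf{Missing idea.} The paper's proof uses this in the one subcase it writes out ("$7-e_{F_1/F'_1}>3$ since $-1$ is always a norm"), and it works uniformly.

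\emph{Case $t_{F_1/F'_1}\ge 10$.} This happens when some $p\mid d$ splits completely in $F'_1$. Then $\rank(A(F_1))\ge 10-1-4=5$, using only $e_{F_1/F'_1}\le 4$.

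\emph{Case $t_{F_1/F'_1}=8$.} Every ramified prime $\mathcal P$ has residue field $\mathbb{F}_{p^2}$. So $\left(\frac{-1,d}{\mathcal P}\right)=\left(\frac{-1}{\mathcal P}\right)=1$, which is the first assertion of Lemma \ref{chemseddin}. Also, $F_1=F'_1(\sqrt{d^*})$ with $d^*\in\{d,qd\}$ and $d^*\equiv1\pmod 4$, so $F_1/F'_1$ is unramified above $2$, and it is unramified at infinity. Thus $-1$ is a local norm everywhere, hence a global norm by Hasse's norm theorem. Since $-1\notin E_{F'_1}^2$, this gives $e_{F_1/F'_1}\le 3$ and $\rank(A(F_1))\ge 4$.

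This settles all three subcases of Lemma \ref{lemma1}-$3)$ at once; it uses only that $d$ has four prime factors. The symbols of $\varepsilon_2$, $\sqrt{\varepsilon_q}$, $\sqrt{\varepsilon_{2q}}$ are not needed.
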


\begin{proof}
	Assume that $r$, $s$, $t$ and $k$ satisfy the conditions  of Lemma \ref{lemma1}-$3-a)$$)$. Let $d=rstk$ such that $\left( \frac{q}{k}\right) =-1$ and $\left( \frac{q}{t}\right) =-1$. 
	Suppose we have $\left( \frac{q}{r}\right) = \left( \frac{q}{s}\right) =1$  such that $[(r \equiv 5 \pmod 8 \text{ and } s \equiv 3  \pmod 4 )$ or $(r \equiv 3\pmod 8 \text{ and } s \equiv  7  \pmod 8 )]$.
		
		If $[r \equiv 5 \pmod 8 \text{ and } s \equiv 3  \pmod 7 ]$, then
		$\rank(A(F_1))=8-1-e_{F_1/F'_{1}}=7-e_{F_1/F'_{1}}>3$, since $-1$ is always a norm in this case.
			Similarly we check that for the remaining cases, we have  $\rank(A(F))> 3$.
\end{proof}

\begin{lemma}
	Let $F:=\QQ(\sqrt{2q},\sqrt{d})$ and $F_{1}:=\QQ(\sqrt{q},\sqrt{d},\sqrt{2})$ where $q\equiv 3 \pmod 4$ is a prime number and $d=\delta rst$ is square-free integer with  $r$, $s$ and $t$ are prime numbers and $\delta \in \{ 1,q \} $   such that $\rank(A(F))=3 $  $($cf. Lemma \ref{lemma3}-$2)$$)$. Then, $\rank(A(F_{1}))=3 $ if and only if one of the following conditions holds:
	 \begin{enumerate}[$1)$]
	  	\item $q \equiv 3 \pmod 8$, $r \equiv s \equiv  3 \pmod 8$,  $t \equiv 7 \pmod 8$ and $\left( \frac{q}{r}\right)=\left( \frac{q}{s}\right)=\left( \frac{q}{t} \right)= -1$,  
	 	
		\item $q \equiv 3 \pmod 8$, $r \equiv s \equiv  5 \pmod 8$, $t \equiv 7 \pmod 8$ and $\left( \frac{q}{r}\right)=\left( \frac{q}{s}\right)=\left( \frac{q}{t} \right)=-1,$
		
		\item  $q \equiv 3 \pmod 8$, $r \equiv s \equiv  3 \pmod 8$, $t \equiv 5 \pmod 8$, $\left( \frac{q}{r}\right)=\left( \frac{q}{s}\right)=-1$ and $ \left( \frac{q}{t} \right)=1,$
		\item  $q \equiv 3 \pmod 8$, $r \equiv s \equiv  5 \pmod 8$, $t \equiv 3, 5 \pmod 8$, $\left( \frac{q}{r}\right)=\left( \frac{q}{s}\right)=-1$ and $ \left( \frac{q}{t} \right)=1,$
		\item  $q \equiv 7 \pmod 8$, $r \equiv s \equiv  3 \pmod 8$, $t \equiv 3, 5 \pmod 8$,$\left( \frac{q}{r}\right)=\left( \frac{q}{s}\right)=-1$ and $\left( \frac{q}{t} \right)=1,$
		\item $q \equiv 7 \pmod 8$, $r \equiv s \equiv  5 \pmod 8$, $t \equiv 3, 5 \pmod 8$, $\left( \frac{q}{r}\right)=\left( \frac{q}{s}\right)=-1$ and $\left( \frac{q}{t} \right)=1,$
		\item $q \equiv 3 \pmod 8$, $r \equiv s \equiv 3 \pmod 8$,  $t \equiv 5 \pmod 8$ and $\left( \frac{q}{r}\right)=\left( \frac{q}{s}\right)=\left( \frac{q}{t}\right)=-1,$
		\item $q \equiv 3 \pmod 8$, $r\equiv s \equiv 3 \pmod 8$,  $t \equiv 7 \pmod 8$, $\left( \frac{q}{r}\right)=\left( \frac{q}{s}\right)=-1$, $\left( \frac{q}{t}\right)=1$ and $\overline{\sqrt{\varepsilon_{2q}}},\overline{\sqrt{\varepsilon_{q}}}  \notin \langle \overline{-1}, \overline{\varepsilon_2} \rangle$,
		\item $q \equiv 3 \pmod 8$, $r \equiv 3 \pmod 8$, $s \equiv t \equiv 5 \pmod 8$ and $\left( \frac{q}{r}\right)=\left( \frac{q}{s}\right)=\left( \frac{q}{t}\right)=-1$,
		\item $q \equiv 3 \pmod 8$, $r \equiv 3 \pmod 8$, $s \equiv 5 \pmod 8$, $t \equiv 7 \pmod 8$, $\left( \frac{q}{r}\right)=\left( \frac{q}{s}\right)=-1$, $\left( \frac{q}{t}\right)=1$ and $\overline{\sqrt{\varepsilon_{q}}}  \notin \langle \overline{\sqrt{\varepsilon_{2q}}},\overline{-1}, \overline{\varepsilon_2} \rangle$,
		\item $q \equiv 3 \pmod 8$, $r \equiv 7 \pmod 8$, $s \equiv 3 \pmod 8$, $t \equiv 5 \pmod 8$, $\left( \frac{q}{r}\right)=1$, $\left( \frac{q}{s}\right)=\left( \frac{q}{t}\right)=-1$ and  $\overline{\sqrt{\varepsilon_{q}}}  \notin \langle \overline{\sqrt{\varepsilon_{2q}}},\overline{-1}, \overline{\varepsilon_2} \rangle$,
		\item $q \equiv 3 \pmod 8$, $r \equiv 7 \pmod 8$, $s \equiv t \equiv 5 \pmod 8$, $\left( \frac{q}{r}\right)=1$, $\left( \frac{q}{s}\right)=\left( \frac{q}{t}\right)=-1$ and $\overline{\sqrt{\varepsilon_{q}}}  \notin \langle \overline{\sqrt{\varepsilon_{2q}}},\overline{-1}, \overline{\varepsilon_2} \rangle$,
		\item $q \equiv 7 \pmod 8$, $r \equiv s \equiv 3 \pmod 8$, $t \equiv 5 \pmod 8$ and $\left( \frac{q}{r}\right)=\left( \frac{q}{s}\right)=\left( \frac{q}{t}\right)=-1$,
		\item $q \equiv 7 \pmod 8$, $r \equiv 3 \pmod 8$, $s \equiv t \equiv 5 \pmod 8$ and $\left( \frac{q}{r}\right)=\left( \frac{q}{s}\right)=\left( \frac{q}{t}\right)=-1$.
	\end{enumerate}
\end{lemma}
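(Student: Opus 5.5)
The plan is to copy the proof of Lemma \ref{lemma4}, now starting from the cases of Lemma \ref{lemma3}-$2)$. Note first that $F_1=F(\sqrt2)=\QQ(\sqrt q,\sqrt 2,\sqrt d)$ is again a quadratic extension of $F'_1=\QQ(\sqrt q,\sqrt2)$. This is the same base field as before, and its $2$-class number is odd. Hence the formula of \cite[Lemma 2.4]{yue} gives
\[
\rank(A(F_1))=t_{F_1/F'_1}-1-e_{F_1/F'_1}.
\]
Since $d\equiv1\pmod 4$, the prime $2$ does not ramify in $F_1/F'_1$. The factor $\delta\in\{1,q\}$ only contributes the unique prime above $q$ in $F'_1$, because $q$ is totally ramified in $\QQ(\sqrt q)$ and inert or ramified above it. So $t_{F_1/F'_1}$ is $1$ or $0$ from $\delta$, plus the number of primes of $F'_1$ above $r,s,t$.

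An odd prime $p$ splits completely in $F'_1$ exactly when $\left(\frac2p\right)=\left(\frac qp\right)=1$, and then gives $4$ primes. Otherwise it gives $2$ primes. For each configuration allowed by Lemma \ref{lemma3}-$2a),b),c)$ I would do the following. Translate the condition $\left(\frac{2q}{\cdot}\right)=\pm1$ into the pairs $\big(p \bmod 8,\left(\frac qp\right)\big)$, count $t_{F_1/F'_1}$, and discard every case where $t_{F_1/F'_1}-1-4>3$ (the unit group of $F'_1$ modulo squares has rank $4$ over the generators $-1,\varepsilon_2,\sqrt{\varepsilon_q},\sqrt{\varepsilon_{2q}}$, so $e\le 4$). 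In particular, two primes $\equiv 7\pmod 8$ with $\left(\frac qp\right)=1$, or any prime $\equiv1\pmod 8$ among the relevant ones, should already force rank $>3$. This parallels the $r\equiv s\equiv7$ cases of Lemma \ref{lemma4}.

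For the surviving cases I compute $e_{F_1/F'_1}$. The unit $u$ is a norm from $F_1$ iff $\left(\frac{u,d}{\mathcal P}\right)=1$ for all ramified $\mathcal P$. At a prime $\mathcal P$ above $p\mid d$ this symbol equals $\left(\frac{u,p}{\mathcal P}\right)$, since the other factors of $d$ are units there. At the prime above $q$, when $\delta=q$, I would avoid computing directly and instead use the product formula to drop one place. Lemma \ref{chemseddin} then supplies every value for $u\in\{-1,\varepsilon_2,\sqrt{\varepsilon_q},\sqrt{\varepsilon_{2q}}\}$ at primes with $p\equiv3,5\pmod8$, and for $p\equiv7,1\pmod 8$ with $\left(\frac qp\right)=-1$. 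Filling the table of symbols gives the rank of the image of $E_{F'_1}$ in $\{\pm1\}^{t}$, which is $e_{F_1/F'_1}$. We then keep exactly the cases with $t-1-e=3$. This produces items $1)$–$7)$, $9)$, $13)$ and $14)$ unconditionally.

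The main obstacle is the cases containing a prime $p\equiv7\pmod 8$ with $\left(\frac qp\right)=1$, which splits completely into $\mathcal P,\mathcal P',\ldots$. There Lemma \ref{chemseddin}-$3)$ only gives the symbols of $-1$ and $\varepsilon_2$, which are $-1$ at both primes, and $1,-1$ respectively. Together with the other primes these show that $\overline{-1},\overline{\varepsilon_2}$ are independent non-norms. When $q\equiv 7\pmod 8$, part $3b)$ shows that $\sqrt{\varepsilon_q},\sqrt{\varepsilon_{2q}}$ add nothing beyond $\langle\overline{-1},\overline{\varepsilon_2}\rangle$ and the other contributions. So $e$ is too small and rank $\ge4$, which excludes these cases for $q\equiv7$. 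When $q\equiv3\pmod8$, part $3a)$ guarantees at least one new independent class. The exact value of $e$ (and hence whether the rank is $3$ or $4$) then cannot be determined by congruences alone. So, as in Lemma \ref{lemma4}, I would state it as the non-membership conditions on $\overline{\sqrt{\varepsilon_q}}$, $\overline{\sqrt{\varepsilon_{2q}}}$, which yields items $8)$, $10)$, $11)$ and $12)$. The remaining combinations are ruled out because $e$ is provably at most one less than required.
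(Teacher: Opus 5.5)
Your strategy is the one the paper uses: Yue's formula for $F_1/F'_1$, the local symbols of $-1,\varepsilon_2,\sqrt{\varepsilon_q},\sqrt{\varepsilon_{2q}}$ taken from Lemma \ref{chemseddin}, elimination by counting ramified primes, and non-membership conditions in the $q\equiv 3\pmod 8$ cases that contain a totally split prime $p\equiv 7\pmod 8$.

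\textbf{The gap: your treatment of $\delta$ is wrong, and as written it breaks the case $\delta=q$.} Since $\sqrt q\in F'_1$, you have $F_1=F'_1(\sqrt{\delta rst})=F'_1(\sqrt{rst})$, and $rst$ is prime to $q$. So no prime of $F'_1$ above $q$ ramifies in $F_1/F'_1$. Equivalently, $(u,d)_{\mathfrak Q}=(u,rst)_{\mathfrak Q}=1$ there, because $q$ is a square in $F'_1$.

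Hence $t_{F_1/F'_1}$ is just the number of primes of $F'_1$ above $r,s,t$, independent of $\delta$. This is the count the paper uses; for example, $t_{F_1/F'_1}=6$ when $r\equiv s\equiv 3$, $t\equiv 7\pmod 8$ and all $(q/\cdot)=-1$.

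If you add a ramified place for $\delta=q$, the product-formula trick does not repair this. The index $e_{F_1/F'_1}$ is unchanged, but $t-1-e$ goes up by one. In item $1)$ with $\delta=q$ you would get rank $4$ instead of $3$, a $\delta$-dependent answer contradicting the statement. Also, for $q\equiv 7\pmod 8$ the prime $q$ splits in $\QQ(\sqrt2)$, so there are two primes above $q$ in $F'_1$, not a unique one.

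\textbf{Two smaller points.}

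First, counting alone does not eliminate primes $\equiv 1\pmod 8$.
\begin{itemize}
\item In case $2)$-$a)$ of Lemma \ref{lemma3}, the totally split $r\equiv 1\pmod 8$ gives $t_{F_1/F'_1}=8$, so $t-1-4=3$. You need the extra observation that $-1$ is a local norm at every ramified prime to get $e\le 3$.
\item In case $2)$-$b)$ with $t\equiv 1\pmod 8$ and $(q/t)=-1$, the prime $t$ contributes only two primes. One has to compute $e=1$ from the symbols, as the paper does, to get rank $4$.
\end{itemize}

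Second, for $q\equiv 7\pmod 8$, Lemma \ref{chemseddin}-$3b)$ as quoted does not by itself give $e\le 3$. Two things are needed:
\begin{itemize}
\item At the totally split prime, $\sqrt{\varepsilon_q}$ must have the same symbols as $\pm\varepsilon_2$. It is not enough that its class lies in $\{\overline 1,-\overline 1,\overline{\varepsilon_2}\}$; a trivial class there would leave $\sqrt{\varepsilon_q}$ independent.
\item At the remaining ramified primes $p\equiv 3,5\pmod 8$ with $(q/p)=-1$, its symbol $\left(\frac{-1}{p}\right)$ must coincide with that of $\varepsilon_2$.
\end{itemize}
Only then does the local relation become a global one. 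Both facts are true, and the first follows from $\sqrt{2\varepsilon_q}=a+b\sqrt q$ with $a^2-qb^2=2$, but you have to verify them.
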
	

\begin{proof}
	We proceed similarly as in the proof of the previous lemma.
	\begin{enumerate}[$1)$]
		\item Assume that $r$, $s$, $t$ and $q$ satisfy the condition 2)-b) of Lemma \ref{lemma3}, i.e, $\left( \frac{2q}{r} \right)=1$ and $\left( \frac{2q}{s} \right)=-\left( \frac{2q}{t} \right)= 1$ and $[r\equiv s \equiv 3 \pmod 8, r\equiv s \equiv 5 \pmod 8 \text{ or } r\equiv s \equiv 7 \pmod 8]$ and $[(t \equiv 1,7 \pmod 8, \left( \frac{q}{t} \right)=-1), \text{ or } (t \equiv 3,5 \pmod 8, \left( \frac{q}{t} \right)=1)].$ Thus, 2)-b) is equivalent to the fact that we have one of the following conditions:
		
		\begin{enumerate}[$a)$]
			\item $r \equiv s \equiv  7 \pmod 8$, $\left( \frac{q}{r}\right)=\left( \frac{q}{s}\right)=1$ and $t \equiv 1,7 \pmod 8, \left( \frac{q}{t} \right)=-1$,
			\item $r \equiv s \equiv  7 \pmod 8$, $\left( \frac{q}{r}\right)=\left( \frac{q}{s}\right)=1$ and $t \equiv 3,5 \pmod 8, \left( \frac{q}{t} \right)=1$,
			\item $r \equiv s \equiv 3 \pmod 8$,$\left( \frac{q}{r}\right)=\left( \frac{q}{s}\right)=-1$ and $t \equiv 1,7 \pmod 8, \left( \frac{q}{t} \right)=-1$,
			\item $r \equiv s \equiv 3 \pmod 8$,$\left( \frac{q}{r}\right)=\left( \frac{q}{s}\right)=-1$ and $t \equiv 3,5 \pmod 8, \left( \frac{q}{t} \right)=1$,
			\item $r \equiv s \equiv 5 \pmod 8$, $\left( \frac{q}{r}\right)=\left( \frac{q}{s}\right)=-1$ and $t \equiv 1,7 \pmod 8, \left( \frac{q}{t} \right)=-1$,
			\item $r \equiv s \equiv 5 \pmod 8$, $\left( \frac{q}{r}\right)=\left( \frac{q}{s}\right)=-1$ and $t \equiv 3,5 \pmod 8, \left( \frac{q}{t} \right)=1$.
		\end{enumerate}
		Assume that $r$, $s$ and $t$ satisfy $a)$ or $b)$, by the proof of  \cite[Lemma 3.5]{2MCD2026}, we conclude that $\rank(A(F_{1}))>3$.
		
		Assume that $r$,$s$ and $t$ satisfy $c)$ and $q \equiv 3 \pmod 8$, then:
		\begin{enumerate}[\rm $\bullet$]
		\item  If $t \equiv 1 \pmod 8$, then $\rank(A(F_{1}))=t_{F_{1}/F'_{1}}-1-e_{F_{1}/F'_{1}}=6-1-e_{F_{1}/F'_{1}}= 5-e_{F_{1}/F'_{1}}=5-1=4$.

	\item  If $t \equiv 7 \pmod 8$, then $\rank(A(F_{1}))=t_{F_{1}/F'_{1}}-1-e_{F_{1}/F'_{1}}=6-1-e_{F_{1}/F'_{1}}= 5-e_{F_{1}/F'_{1}}=5-2=3$. 
		
	\end{enumerate}
		We proceed similarly for the rest.
		\item Assume that $r$, $s$, $t$ and $q$ satisfy the condition 2)-c) of Lemma \ref{lemma3}, i.e, $\left( \frac{2q}{r} \right)=\left( \frac{2q}{s} \right)=\left( \frac{2q}{t} \right)= 1$ and $\left( \frac{-1}{r} \right)=\left( \frac{2}{s} \right)=-1 \text{ and } \left( \frac{-1}{t} \right) \neq \left( \frac{2}{t} \right)$.
		We distinguish the following cases for $r$, $s$ and $t$:
		\begin{enumerate}[\rm $\bullet$]
			\item $[ r\equiv 3 \pmod 8$, $\left( \frac{q}{r}\right)=-1]$ or $[ r\equiv 7 \pmod 8$, $\left( \frac{q}{r}\right)=1]$, 
			\item $ s \equiv 3,5 \pmod 8$, $\left( \frac{q}{s}\right)=-1$,
			\item $[ t\equiv 5 \pmod 8$, $\left( \frac{q}{t}\right)=-1]$ or $[ t\equiv 7 \pmod 8$, $\left( \frac{q}{t}\right)=1]$.
		\end{enumerate}
		\ding{229}	Suppose $q\equiv 3 \pmod 8$, then we summarize the different results in the following table.
		\begin{center}
			\begin{tabular}{p{5cm}|p{7cm}|p{5cm}}
				\textbf{Case 1: for the prime $r$} &
				\textbf{Case 2: for the prime $s$} &
				\textbf{Case 3: for the prime $t$} \\
				
				$\begin{array}{l}
					\text{if } r \equiv 3 \pmod{8}\\[0.2cm]
					\left( \frac{-1,d}{\mathcal{R}} \right)=1\\[0.2cm]
					\left( \frac{\varepsilon_2,d}{\mathcal{R}} \right)=-1\\[0.2cm]
					\left( \frac{\sqrt{\varepsilon_q},d}{\mathcal{R}} \right)=1\\[0.2cm]
					\left( \frac{\sqrt{\varepsilon_{2q}},d}{\mathcal{R}} \right)=1\\[0.2cm]
					\text{if } r \equiv 7 \pmod{8}\\[0.2cm]
					\left( \frac{-1,d}{\mathcal{R}} \right)=-1\\[0.2cm]
					\left( \frac{-1,d}{\mathcal{R}'} \right)=-1\\[0.2cm]
					\left( \frac{\varepsilon_2,d}{\mathcal{R}} \right)=1\\[0.2cm]
					\left( \frac{\varepsilon_2,d}{\mathcal{R}'} \right)=-1\\[0.2cm]
					
				\end{array}$
				&
				$\begin{array}{l}
					\left( \frac{-1,d}{\mathcal{S}} \right)=1\\[0.2cm]
					\left( \frac{\varepsilon_2,d}{\mathcal{S}} \right)=
					\begin{cases}
						-1, s \equiv 3 \pmod{8},\\
						1, s \equiv 5 \pmod{8}
					\end{cases}\\[0.2cm]
					\left( \frac{\sqrt{\varepsilon_q},d}{\mathcal{S}} \right)=1\\[0.2cm]
					\left( \frac{\sqrt{\varepsilon_{2q}},d}{\mathcal{S}} \right)=
					\begin{cases}
						1, s \equiv 3 \pmod{8},\\
						-1, s \equiv 5 \pmod{8}
					\end{cases}
				\end{array}$
				&
				$\begin{array}{l}
					\text{if } t \equiv 5 \pmod{8}\\[0.2cm]
					\left( \frac{-1,d}{\mathcal{T}} \right)=1\\[0.2cm]
					\left( \frac{\varepsilon_2,d}{\mathcal{T}} \right)=-1\\[0.2cm]
					\left( \frac{\sqrt{\varepsilon_q},d}{\mathcal{T}} \right)=1\\[0.2cm]
					\left( \frac{\sqrt{\varepsilon_{2q}},d}{\mathcal{T}} \right)=1\\[0.2cm]
					\text{if } t \equiv 7 \pmod{8}\\[0.2cm]
					\left( \frac{-1,d}{\mathcal{T}} \right)=-1\\[0.2cm]
					\left( \frac{-1,d}{\mathcal{T}'} \right)=-1\\[0.2cm]
					\left( \frac{\varepsilon_2,d}{\mathcal{T}} \right)=1\\[0.2cm]
					\left( \frac{\varepsilon_2,d}{\mathcal{T}'} \right)=-1\\[0.2cm]
					
				\end{array}$
			\end{tabular}
		\end{center}
	 	Checking all cases yields the stated result. 
	\end{enumerate}      
\end{proof}

\begin{lemma}
	Let $F:=\QQ(\sqrt{2q},\sqrt{d})$ and $F_{1}:=\QQ(\sqrt{q},\sqrt{d},\sqrt{2})$ where $q\equiv 3 \pmod 4$ is a prime number and $d=\delta rstk$ is square-free integer with $r$, $s$, $t$ and $k$ are prime numbers and $\delta \in \{ 1,q \} $   such that $rank(A(F))=3 $ $($cf. Lemma \ref{lemma3}-$3)$$)$.	We have $\rank(A(F_{1}))>3 $. 
\end{lemma}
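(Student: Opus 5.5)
We follow the strategy of the proof of Lemma \ref{lemma41}. We apply the ambiguous class number formula (the lemma of \cite{yue}) to the quadratic extension $F_1/F'_1$, where $F'_1=\QQ(\sqrt{q},\sqrt2)$ has odd class number. This gives
$$\rank(A(F_1))=t_{F_1/F'_1}-1-e_{F_1/F'_1}.$$
The unit group $E_{F'_1}$ modulo squares is generated by $-1,\varepsilon_2,\sqrt{\varepsilon_q},\sqrt{\varepsilon_{2q}}$, so $e_{F_1/F'_1}\le 4$. It therefore suffices to show that $t_{F_1/F'_1}\ge 8$ in every case, or, when $t_{F_1/F'_1}$ is smaller, that enough of the units above are norms. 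First we note that $F_1=F'_1(\sqrt{d})=F'_1(\sqrt{rstk})$, since $\sqrt q\in F'_1$ absorbs $\delta$. Hence the primes of $F'_1$ that ramify in $F_1$ are exactly those above $r,s,t,k$; the prime above $2$ does not ramify, because $d\equiv1\pmod 4$ and $\sqrt2\in F'_1$.

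The key step is counting how each of $r,s,t,k$ splits in $F'_1$. A prime $p$ splits into $1$, $2$ or $4$ primes, according to the Legendre symbols $\left(\frac{q}{p}\right)$, $\left(\frac{2}{p}\right)$ and $\left(\frac{2q}{p}\right)$. The hypotheses of Lemma \ref{lemma3}-$3)$ give $\left(\frac{2q}{k}\right)=\left(\frac{2q}{t}\right)=-1$, so each of $t$ and $k$ has exactly two primes above it in $F'_1$ (it splits in exactly one of the quadratic subfields). Likewise, each of $r,s$ contributes at least $2$, and contributes $4$ when $\left(\frac{2q}{\cdot}\right)=1$ with $\left(\frac{2}{\cdot}\right)=1$. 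This gives $t_{F_1/F'_1}\ge 8$, hence $\rank(A(F_1))\ge 8-1-4=3$. We still need strict inequality, so the argument splits according to the three subcases $a)$, $b)$, $c)$ of Lemma \ref{lemma3}-$3)$.

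In case $c)$, where $\left(\frac{2q}{r}\right)=\left(\frac{2q}{s}\right)=1$, and in case $b)$, we get extra splitting. For example, a prime $\equiv 7\pmod 8$ with $\left(\frac{q}{\cdot}\right)=1$ is totally split in $F'_1$. This pushes $t_{F_1/F'_1}\ge 9$ or $10$, so $\rank(A(F_1))\ge 4$. In the remaining configurations, with $t_{F_1/F'_1}=8$, we must show that $e_{F_1/F'_1}\le 3$, that is, that at least one nontrivial unit is a norm. As in the proof of Lemma \ref{lemma41}, we show that $-1$ is a norm. By Lemma \ref{chemseddin}, $\left(\frac{-1,d}{\mathcal P}\right)=1$ at every prime $\mathcal P$ above a prime that is not totally decomposed in $F'_1$. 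So whenever none of $r,s,t,k$ is totally split, the product formula shows that $-1$ is a norm, and the rank is $\ge 4$.

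The main obstacle is the subcase where some prime, say $r\equiv 7\pmod 8$ with $\left(\frac{q}{r}\right)=1$, is totally split. There $-1$ need not be a norm, but then $t_{F_1/F'_1}$ increases by two, so the bound $t-1-4\ge 4$ still holds. We will tabulate the residues mod $8$ and the values of $\left(\frac{q}{\cdot}\right)$ allowed by Lemma \ref{lemma3}-$3)$. In each entry we either count at least $9$ ramified primes, or exhibit $-1$ (or $\varepsilon_2$, via Lemma \ref{chemseddin}-$2)$) as a norm. This yields $\rank(A(F_1))>3$ throughout.
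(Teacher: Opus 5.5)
Your proposal is correct and follows the paper's route. You apply the ambiguous class number formula to $F_1/F'_1$ and note that $t_{F_1/F'_1}=8$ unless one of $r,s,t,k$ splits completely in $F'_1$, in which case $t_{F_1/F'_1}\geq 10$; when $t_{F_1/F'_1}=8$ you use Lemma \ref{chemseddin} to show $-1$ is a norm, so $e_{F_1/F'_1}\le 3$.

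This totally-split-or-not dichotomy settles all three subcases of Lemma \ref{lemma3}-$3)$ at once, which is cleaner than the paper's "we proceed similarly" sketch. Three small corrections. A prime not dividing $2q$ always has $2$ or $4$ primes above it in the biquadratic field $F'_1$, never $1$, so $t_{F_1/F'_1}$ is even and "$t\ge 9$" never occurs. It is Hasse's norm theorem for the cyclic extension $F_1/F'_1$, not the product formula, that turns the local symbols into a global norm. Finally, your statement that $t_{F_1/F'_1}\ge 8$ "suffices" only yields $\rank(A(F_1))\ge 3$; you do correct this later.
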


\begin{proof}
	\begin{enumerate}[$1)$]
		\item Assume that $r$, $s$, $t$ and $k$ satisfy the condition 3)-a) of Lemma \ref{lemma3}, i.e, $\left( \frac{2q}{r} \right)=\left( \frac{2q}{s} \right)= \left( \frac{2q}{t} \right)=\left( \frac{2q}{k} \right)=-1$,
		then  $\rank(A(F_{1}))=t_{F_{1}/F'_{1}}-1-e_{F_{1}/F'_{1}}=8-1-e_{F_{1}/F'_{1}}= 7-e_{F_{1}/F'_{1}}$, and it is  equal to $3$ if and only if $e_{F_{1}/F'_{1}}=4$, which is not the case since $-1$ is always a norm in these cases.
		\item Assume that $r$, $s$, $t$ and $k$ satisfy the condition 3)-b) of Lemma \ref{lemma3}, i.e, $\left( \frac{2q}{r} \right)=-\left( \frac{2q}{s} \right)=- \left( \frac{2q}{t} \right)=-\left( \frac{2q}{k} \right)=1$ and $[r\equiv  3 \pmod 4 \text{ or } r \equiv 5 \pmod 8 ]$, we check that $\rank(A(F_{1}))>3$.

		 We proceed similarly for the rest.
	\end{enumerate}
\end{proof}

	

\begin{thebibliography}{11}
		
		
		
			\bibitem{Azmouh2-rank} A. Azizi,   et A. Mouhib,	Le 2-rang du groupe de classes de certains corps biquadratiques et applications, International Journal of Mathematics, 15 (2004),   pp. 169-182. 
		
	\bibitem{2MCD2026}
	M. M. Chems-Eddin, Greenberg's conjecture and Iwasawa module of Real biquadratic fields I, Journal of Number Theory, 281 (2026), 224--266.
	
	\bibitem{chemskatharina}
	M. M. Chems-Eddin and K. M\"uller, \textit{$2$-Class groups of cyclotomic towers of imaginary biquadratic fields and applications}, Int. J. Number Theory 17 (2021), 1781--1793.
	
	
	\bibitem{Elmahi1}
	A. El Mahi, On the structure of the Iwasawa module for $\mathbb{Z}_2$-extensions of certain real biquadratic fields, Acta Math. Hungar. 174 (2024), 49--61.
	
	\bibitem{Elmahi2}
	A. El Mahi and M. Ziane, On Greenberg's conjecture for certain real biquadratic fields, Period. Math. Hungar. 88 (2024), 475--483.
	
	\bibitem{Elmahi3}
	A. El Mahi and M. Ziane, The Iwasawa invariant $\mu$ vanishes for $\mathbb{Z}_2$-extensions of certain real biquadratic fields, Acta Math. Hungar. 165 (2021), 146--155.
	
	\bibitem{fukudaa}
	T. Fukuda, Remarks on $\mathbb{Z}_p$-extensions of number fields, Proc. Jpn. Acad. Ser. A Math. Sci. 70 (1994), 264--266.
	
	\bibitem{iwasawa59}
	K. Iwasawa, On $\Gamma$-extensions of algebraic number fields, \textit{Bull. Amer. Math. Soc.} 65 (1959), 183--226.
	
	\bibitem{kuruda}
	S. Kuroda, \"Uber die Klassenzahlen algebraischer Zahlk\"orper, Nagoya Math. J. 1 (1950), 1--10.
	
	\bibitem{8}
	H. Laxmi and A. Saikia, $\mathbb{Z}_2$-extension of real quadratic fields with $\mathbb{Z}/2\mathbb{Z}$ as $2$-class group at each layer, Ramanujan J. 64 (2024), 1285--1301. https://doi.org/10.1007/s11139-024-00869-8.
	
	\bibitem{Laxmi}
	H. Laxmi and A. Saikia, Stability of $2$-class groups in the $\mathbb{Z}_2$-extension of certain real biquadratic fields, preprint (2025), https://arxiv.org/abs/2501.12782.
	
	\bibitem{mizusawa5}
	Y. Mizusawa, On metabelian $2$-class field towers over $\mathbb{Z}_2$-extensions of real quadratic fields, Can. Math. Bull. 65 (2022), 795--805.
	
	\bibitem{MR2024}
	A. Mouhib and S. Rouas, On unramified Galois $2$-groups over $\mathbb{Z}_2$-extensions of some imaginary biquadratic number fields, Acta Math. Hungar. 172 (2024), 481--491.
	
	\bibitem{washington1997introduction}
	L. C. Washington, \textit{Introduction to Cyclotomic Fields}, Graduate Texts in Mathematics, Vol. 83, Springer-Verlag, New York, 1982.
	
	\bibitem{yue}
	Q. Yue, The generalized R\'edei matrix, Math. Z. 261 (2009), 23--37.
		
	 
		
		
	\end{thebibliography}
\end{document}